\numberwithin{equation}{section}
\theoremstyle{plain}
\newtheorem{thm}{Theorem}[section]
\newtheorem{prop}[thm]{Proposition}
\newtheorem{lem}[thm]{Lemma}
\newtheorem{cor}[thm]{Corollary}
\newtheorem{rem}[thm]{Remark}
\theoremstyle{remark}
\newtheorem{df}[thm]{Definition}
\DeclareMathOperator{\E}{\mathbb{E}}
\title[{Sample Covariance of Sparse  Matrices}]
      {On the Largest and the Smallest  Singular Value of Sparse Rectangular Random Matrices}
\author[F.~G\"otze]{F.~G\"otze}
\address{Friedrich G{\"o}tze\\
 Faculty of Mathematics\\
 Bielefeld University \\
 Bielefeld, Germany
}
\email{goetze@math.uni-bielefeld.de}
\author[A.~Tikhomirov]{A.~Tikhomirov}
\address{Alexander N. Tikhomirov\\
 Institute of Physics and Mathematics\\
 Komi Science Center of Ural Branch of RAS \\
 Syktyvkar, Russia;  }
\email{tikhomirov@ipm.komisc.ru}
\keywords{Random matrices, sample covariance matrices, Marchenko--Pastur law}
\date{\today}
\begin{document}

\begin{abstract}{
{We derive estimates for the largest and  smallest} singular values of sparse rectangular $N\times n$ random matrices, assuming $\lim_{N,n\to\infty}\frac nN=y\in(0,1)$.
We consider 
{a model with sparsity parameter $p_N$ such that $Np_N\sim \log^{\alpha }N$ for some $\alpha>1$, and assume that the moments of the matrix elements satisfy the condition 
$\E|X_{jk}|^{4+\delta}\le C<\infty$.  We assume also that  the entries of matrices we consider are truncated at} the level $(Np_N)^{\frac12-\varkappa}$ 
with $\varkappa:=\frac{\delta}{2(4+\delta)}$.
}\end{abstract}
\maketitle
\section{Introduction}In the last five to ten years, significant progress has been made in studying the 	asymptotic behavior of the spectrum of sparse random matrices. A typical example of such matrices is the incidence matrix of a random graph. Thus, for Bernoulli matrices Konstantin Tikhomirov obtained exact asymptotics for the probability of singularity, see~\cite{Kostya}; also, see~\cite{LitKos:20}. For the adjacency matrix of Erd\"os - Renyi random graphs, 
 H.-T. Yau and L. Erd\"os~\&~Co. proved a local semicircular law 
and investigated the behavior of the largest and the smallest singular values and  as well as eigenvector statistics, see the papers  of \cite{YauBurg,YauKnowless} and the literature therein. 
In particular  for adjacency matrices of regular graphs, local limit theorems 
and the behavior of extremal eigenvalues were investigated by  H.-T. Yau and co-authors \cite{DauKnowYau}.  For non-Hermitian sparse random matrices M. Rudelson and K. Tikhomirov proved the circular law under unimprovable conditions on the probability of sparsity and the moments of distributions of the matrix elements (see \cite{RudTikh}).
J.O. Lee and J.Y. Hwang  
studied the spectral properties of sparse sample covariance matrices, which includes adjacency
matrices of the bipartite Erd\"os--Renyi graph model).  
In  \cite{Lee} the authors prove a local law for the eigenvalues density up to the
upper spectral edge assuming that sparsity probability $p$ has order $N^{-1+\varepsilon}$ for some $\varepsilon>0$ (here $N$ denotes the growing order of the matrix)
and entries of matrix $X_{ij}$ are i.i.d. r.v.'s such that (in our notations)
\begin{equation}\label{Lee}
\E|X_{11}|^2=1\text{ and }\E|X_{11}|^q\le {(Cq)^{cq}} \text{ for every }q\ge1.
\end{equation}
{They also prove the Tracy-Widom limit law  for the largest eigenvalues of sparse sample covariance matrices. However, in the  proof of the local Marchenko-Pastur law and the Tracy-Widom limit, they assume a priori that the result of \cite[Lemma 3.11]{Ding} holds for sparse matrices (see  \cite[Proposition 2.13]{Lee}), which includes, in particular, the boundedness of the largest singular value that is the operator norm) of a sparse matrix. They don't investigate  the smallest singular value of sparse rectangular matrices though.}

We 
derive  bounds for the { \it smallest} and the {\it largest} singular values of sparse rectangular random matrices assuming that the probability  $p_N$ 
 decreases in such a way that $Np_N\ge \log^{\frac2{\varkappa}}N$ for some $\varkappa>0$, and that the moment conditions are weaker than those in \eqref{Lee} 
 (see condition \eqref{moment}). 
 Our main result is devoted to the smallest singular value of a sparse rectangular random matrix from an ensemble  of {\it dilute} Wigner type matrices. 

Suppose $n\ge1$ and  $N>n$.  
Consider independent identically distributed zero mean random variables $X_{jk}$, $1\le  j\le N$, $1\le k\le n$ with $\E X_{jk}^2=1$ {( where the distribution of $X_{jk}$ may depend on $N$), which are independent of  a set of independent Bernoulli random variables} $\xi_{jk}$, $1\le  j\le N$, $1\le k\le 
n$, with $\E\xi_{jk}=p_N$. 
In what follows we shall simplify notation by denoting  $p=p_N$. We now introduce the following model of  dilute sparse matrices as a sequence of random matrices of the following type
\begin{equation}\label{sparerectmat}
\mathbf X=(\xi_{jk}X_{jk})_{1\le j\le N, 1\le k\le n}.
\end{equation}
Denote by $s_1\ge \cdots\ge s_n$ the singular values of $\mathbf X$, and let $\mathbf Y=\mathbf X^*\mathbf X$ 
denote the  sample covariance matrix.

Put $y=y(N,n)=\frac nN$. We shall assume that $y(N,n)\to y_0<1$ as $N,n\to\infty$. {In what follows we shall 
 vary the parameter $N$ only.

\begin{thm}\label{lambda}Let  $\E X_{jk}=0$ and $\E |X_{jk}|^2=1$. 
Suppose that there exists a positive constant $C>0$ such that
\begin{equation}\label{delta}
\E|X_{jk}|^{4+\delta}\le C<\infty,
\end{equation}
for any $j,k\ge1$ and for some $\delta>0$. Suppose also that there exists a positive constant $B$, such that
\begin{equation}\label{np}
Np\ge B\log^{\frac3{2\varkappa}}N,
\end{equation}
where $\varkappa=\frac{\delta}{2(4+\delta)}$. 

Then for every $Q\ge1$ and $A>0$ there exists a constant $K=C(Q, \delta,\mu_{4+\delta}, A,B)$ such that
\begin{equation*}
\Pr\{\,s_1\ge K\sqrt{Np}\}\le CN^{-Q}{+N^2p\Pr\{|X_{11}|>A(Np)^{\frac12-\varkappa}\ln N}\}.
\end{equation*}

\end{thm}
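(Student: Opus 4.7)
The plan is to combine a truncation step with the trace (moment) method, choosing the moment order $m\sim\log N$ so as to produce polynomial decay. The second error term $N^2p\,\Pr\{|X_{11}|>A(Np)^{1/2-\varkappa}\ln N\}$ is exactly the cost of truncating every entry at the level $M:=A(Np)^{1/2-\varkappa}\ln N$: since $\Pr\{|\xi_{jk}X_{jk}|>M\}=p\,\Pr\{|X_{jk}|>M\}$, a union bound over the $Nn\le N^2$ positions gives
$$\Pr\{\max_{j,k}|\xi_{jk}X_{jk}|>M\}\le N^2p\,\Pr\{|X_{11}|>M\}.$$
On the complementary event, $\mathbf X$ coincides with the truncated matrix $\widetilde{\mathbf X}:=(\xi_{jk}X_{jk}\one\{|X_{jk}|\le M\})$, so it suffices to prove $\Pr\{s_1(\widetilde{\mathbf X})\ge K\sqrt{Np}\}\le CN^{-Q}$.

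Next I would centre the truncated entries. Setting $\mu:=\E X_{jk}\one\{|X_{jk}|\le M\}$, the moment condition \eqref{delta} yields $|\mu|\le M^{-3-\delta}\E|X_{11}|^{4+\delta}\le CM^{-3-\delta}$. Writing $\widetilde{\mathbf X}=\widehat{\mathbf X}+\mu\,(\xi_{jk})$, the mean part is a scalar multiple of a Bernoulli matrix whose operator norm is easily controlled by the Frobenius norm or a row-sum argument and is of order $|\mu|(pN+\sqrt{Np})$, which is $o(\sqrt{Np})$ by the computation $M^{3+\delta}\gg\sqrt{Np}$. One is thus reduced to bounding the largest singular value of the centred, uniformly bounded matrix $\widehat{\mathbf X}$, whose entries satisfy $|\widehat X_{jk}|\le 2M$ and $\E\widehat X_{jk}^2\le 1$.

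For this centred matrix I apply $s_1^{2m}\le\Tr(\mathbf Y^m)$ with $\mathbf Y:=\widehat{\mathbf X}^*\widehat{\mathbf X}$ and Markov's inequality. Expanding the trace,
$$\E\Tr(\mathbf Y^m)=\sum_{\gamma}\prod_{e\in E(\gamma)}\E\bigl(\xi_e\widehat X_e\bigr)^{k_e(\gamma)},$$
where $\gamma$ runs over closed bipartite walks of length $2m$, $E(\gamma)$ is its set of distinct edges, and $k_e(\gamma)$ is the multiplicity of $e$, with $\sum_e k_e=2m$. Each factor equals $p\cdot \E\widehat X^{k_e}$ by independence of $\xi$ and $X$, and truncation yields $\E|\widehat X|^{k_e}\le CM^{(k_e-4-\delta)_+}$ together with $\E\widehat X^2\le 1$. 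Grouping walks by $(v,e)=(\#\mathrm{vertices},\#\mathrm{distinct\ edges})$ and using standard bipartite Catalan/Narayana enumeration, one obtains a bound of the form
$$\E\Tr(\mathbf Y^m)\le N(Np)^m C^m\sum_{s\ge 0}\bigl(C'M^2/(Np)\bigr)^s m^{O(s)},$$
where $s=m-e$ measures the deviation from a tree walk and each extra cycle costs a factor $M^2/(Np)$. Under the hypothesis \eqref{np}, $M^2/(Np)\le A^2B^{-2\varkappa}/\ln N$, which absorbs the $m^{O(s)}$ combinatorial factor for $m\le c\log N$, so the series sums to a constant.

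Plugging this into Markov's inequality with $m=\lceil c\log N\rceil$,
$$\Pr\{s_1\ge K\sqrt{Np}\}\le\frac{\E\Tr(\mathbf Y^m)}{K^{2m}(Np)^m}\le N(C_0/K^2)^m,$$
which is at most $CN^{-Q}$ once $K$ is chosen sufficiently large depending on $Q$. The main obstacle is precisely the combinatorial estimate of $\E\Tr(\mathbf Y^m)$ in the sparse regime: one must simultaneously track the number $v$ of vertices (producing $N^v$ index choices), the number $e$ of distinct edges (producing the sparsity factor $p^e$), and the multiplicities $k_e$ (which trigger the $M^{k_e-4-\delta}$ loss from higher truncated moments). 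The exponent $3/(2\varkappa)$ in the sparsity assumption is exactly what is needed to balance the logarithmic factor in $M$ against these moment losses so that non-tree contributions are absorbed into the leading constant for $m$ of order $\log N$.
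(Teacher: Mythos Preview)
Your truncation and centering reductions match the paper's exactly, and the overall architecture (reduce to bounded, centred entries; take moments of order $\log N$; finish with Markov) is sound. The core technical step, however, is genuinely different. You propose the classical trace/moment method, expanding $\E\Tr(\mathbf Y^m)$ as a sum over closed bipartite walks and controlling non-tree contributions via the $M^2/(Np)$ ratio. The paper instead invokes Seginer's theorem, which bounds $\E\|\mathbf X\|^q$ for $q\le 2\log N$ by the $q$-th moments of the maximal row and column Euclidean norms; these are then handled in two lines by Rosenthal's inequality, using only $\E|X_{11}|^q\le C^q(Np)^{q/2-q\varkappa-2}\ln^{q-4-\delta}N$ from the truncation. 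The payoff of the paper's route is brevity: all the combinatorics you would have to carry out (tracking vertex count, distinct-edge count, and edge multiplicities simultaneously) is absorbed into Seginer's black box, and the sparsity condition $Np\ge B\log^{3/(2\varkappa)}N$ drops out directly from the requirement that $(q\ln^2 N/(Np)^{2\varkappa})^{q/2}$ stay bounded for $q\sim\log N$. Your approach is self-contained and would give the same threshold, but the ``standard bipartite Catalan/Narayana enumeration'' you allude to is genuinely laborious in the sparse, heavy-tailed setting and would need to be written out in full. One minor point: your centering splits off the \emph{random} matrix $\mu(\xi_{jk})$, whose operator norm requires its own argument; the paper more simply subtracts the deterministic $\E\widetilde{\mathbf X}$, a rank-one matrix with norm $p|\mu|\sqrt{Nn}=o(\sqrt{Np})$.
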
 
\begin{thm}\label{minmain}Let  $\E X_{jk}=0$ and $\E |X_{jk}|^2=1$. 
Suppose that
$$
\E|X_{11}|^{4}=\mu_4<\infty,
$$
 and  there exists a positive constant $B$, such that
\begin{equation}\label{np}
Np\ge B\log^{2}N.
\end{equation}

Then there exists  a constant $\tau_0>0$ such that for  every 
$\tau\le\tau_0$ , $Q\ge1$ and $K>0$ there exists a constant $C=C(Q, \mu_{4},K,B)$ with
\begin{equation*}
\Pr\{\,s_n\le \tau\sqrt{Np}\}\le CN^{-Q}{+\Pr\{s_1>K\sqrt{Np}\}}.
\end{equation*}

\end{thm}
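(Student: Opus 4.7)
The plan is to combine a standard $\epsilon$-net argument on $S^{n-1}$ --- conditional on the operator-norm bound $\{s_1\le K\sqrt{Np}\}$ from Theorem~\ref{lambda} --- with a row-wise concentration estimate for $\|\mathbf Xv\|^2$, separating the cases of ``spread out'' and ``concentrated'' unit vectors. The slack $y_0<1$ in the aspect ratio provides the room needed to close the union bound.

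Condition on $\Omega_K=\{s_1\le K\sqrt{Np}\}$; its complement contributes the last term in the bound. For $\epsilon=\tau/(2K)$, pick an $\epsilon$-net $\mathcal N$ of $S^{n-1}$ with $|\mathcal N|\le(6K/\tau)^n$; then $\{s_n\le\tau\sqrt{Np}\}\cap\Omega_K$ implies $\|\mathbf Xv_0\|\le 2\tau\sqrt{Np}$ for some $v_0\in\mathcal N$. It therefore suffices to prove, for each fixed $v\in S^{n-1}$, an exponential bound $\Pr\{\|\mathbf Xv\|^2\le c_0Np\}\le e^{-c_1N}$ with $c_0,c_1$ depending only on $y_0,\mu_4,B$, and then choose $\tau_0$ small enough that $c_1>y_0\log(6K/\tau_0)$, so the union bound is $\le e^{-c_1N/2}\le CN^{-Q}$. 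Writing $R_j(v)=\sum_k\xi_{jk}X_{jk}v_k$, the rows are i.i.d., $\E R_j(v)^2=p$, and by direct expansion $\E R_j(v)^4\le 3p^2+\mu_4 p\|v\|_\infty^2$.

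For \emph{spread-out} $v$ with $\|v\|_\infty\le\sqrt p$, the fourth moment is $\le(3+\mu_4)p^2$, so Paley--Zygmund gives $\Pr\{R_j(v)^2\ge p/2\}\ge\alpha(\mu_4)>0$; Chernoff on the i.i.d.\ indicator variables then shows at least $\alpha N/2$ rows satisfy this with probability $1-e^{-cN}$, whence $\|\mathbf Xv\|^2\ge\alpha Np/4$. For \emph{concentrated} $v$ with $\|v\|_\infty>\sqrt p$, let $S=\{k:v_k^2\ge p\}$, so $|S|\le 1/p$; decompose $v=v_S+v_{S^c}$ and use that $\mathbf X_S,\mathbf X_{S^c}$ are independent column-blocks. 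If $\|v_{S^c}\|\ge1/2$, apply the spread-out bound to $v_{S^c}/\|v_{S^c}\|$ on $\mathbf X_{S^c}$ and project $\mathbf Xv$ orthogonally to $\mathrm{range}(\mathbf X_S)$, whose rank-$|S|$ defect has expected squared norm $p\cdot|S|\le 1\ll Np$. If $\|v_{S^c}\|<1/2$, the vector is essentially $|S|$-sparse, and the much smaller net $\binom{n}{\lceil 1/p\rceil}(C/\epsilon)^{\lceil 1/p\rceil}=e^{O(N/\log N)}$ combined with direct Bernstein on column norms $\|\mathbf Xe_k\|^2\approx Np$ (after an auxiliary truncation at level $\sqrt{Np}\log^{-C}N$) closes the case.

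The hard point is the concentrated case: row-wise Paley--Zygmund alone gives probability only $O(p)$ per row when $\|v\|_\infty\sim 1$, too weak after tensorization. The decomposition $v=v_S+v_{S^c}$ rescues this by reducing to spread-out analysis on $\mathbf X_{S^c}$, but sharpening the projection-defect estimate to exponential-in-$N$ precision under only a fourth-moment hypothesis requires a preliminary truncation whose residual is absorbed by the operator-norm conditioning. The interplay between the sparsity threshold $Np\ge B\log^2N$, the truncation level, and the net's entropy is what forces $\tau_0$ to depend on all of $y_0,\mu_4,K,B$.
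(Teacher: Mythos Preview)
Your union-bound arithmetic is backwards, and this is the fatal gap. You write ``choose $\tau_0$ small enough that $c_1>y_0\log(6K/\tau_0)$'', but as $\tau_0\to 0$ the right-hand side diverges; the inequality actually forces $\tau_0$ to be \emph{bounded below}, $\tau_0>6Ke^{-c_1/y_0}$. At the same time your Paley--Zygmund/Chernoff single-vector estimate is for the fixed event $\{\|\mathbf Xv\|^2\le c_0Np\}$ with $c_0,c_1$ of order $\alpha(\mu_4)=1/(4(3+\mu_4))$, so you also need $2\tau_0\le\sqrt{c_0}$. These two constraints are incompatible once $K$ is even moderate: $c_1$ does not scale with $K$ or $\tau_0$, so for $K>\tfrac{1}{12}\sqrt{c_0}\,e^{c_1/y_0}$ no admissible $\tau_0$ exists. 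The paper resolves exactly this issue in Lemma~\ref{new} by a characteristic-function argument (Gaussian linearization $e^{-s^2/2}=\E e^{is\eta}$) that yields, for incompressible $v$, a single-vector bound of the form $\bigl(Ct/\rho\bigr)^N$. Because this bound \emph{improves} as $t\downarrow 0$, it beats the net cardinality $(C/t)^{yN}$ with a surplus $t^{(1-y)N}$, and that is where the aspect-ratio slack $y<1$ is actually spent. A Paley--Zygmund bound cannot supply this $t$-dependence.

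Your concentrated-vector analysis has independent gaps. The projection-defect step gives only an expected squared norm $\le p|S|\le 1$, not an exponential-in-$N$ deviation; the set $S$ depends on $v$, so a net over $v$ does not automatically control the block $\mathbf X_S$; and in the ``essentially sparse'' sub-case, knowing column norms $\|\mathbf Xe_k\|^2\approx Np$ gives no lower bound on $\|\mathbf Xv\|$ for $|S|>1$---that is precisely the invertibility problem you are trying to solve. The paper handles compressible vectors by the recursive scheme of Section~\ref{s3}: starting from sparsity level $\delta_{0,N}\sim p/|\log p|$ (Lemma~\ref{comp}) and boosting by a factor $\sim Np/|\log p|$ at each of $L\sim\log N/\log\log N$ stages (Lemmas~\ref{2.6} and~\ref{finsparse}) until reaching a constant level $\gamma_0$. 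This recursion is what makes the compressible bound work when $p\to 0$; a one-shot split at the single threshold $\|v\|_\infty=\sqrt p$ does not substitute for it.
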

These results  immediately imply  the following corollary. 
\begin{cor}Under conditions of Theorem \ref{lambda} there exist  
 a constant $\tau_0>0$  such that for any $\tau\le \tau_0$ and for any $A>0$ there exists a constant $C=C(A,\delta)$ depending on $A$ and $\delta$  such that the following inequality holds
\begin{equation*}
\Pr\{\,s_n\le \tau\sqrt{Np}\}\le CN^{-Q}{+N^2p\Pr\{|X_{11}|>A(Np)^{\frac12-\varkappa}\ln N\}}.
\end{equation*}
\end{cor}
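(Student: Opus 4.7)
The plan is to derive the corollary as a direct combination of Theorems \ref{lambda} and \ref{minmain}, with essentially no new analytic work. The first task is to verify that the hypotheses of Theorem \ref{minmain} are implied by those of Theorem \ref{lambda}. Since $\delta>0$, one has $\varkappa=\delta/(2(4+\delta))\in(0,1/2)$, so $3/(2\varkappa)>3>2$, and the sparsity assumption $Np\ge B\log^{3/(2\varkappa)}N$ is strictly stronger than $Np\ge B\log^{2}N$ for $N$ large. The moment bound $\E|X_{jk}|^{4+\delta}\le C$ gives $\mu_4<\infty$ by Lyapunov's inequality. Hence both theorems are available under the assumptions of the corollary.

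Next, given the prescribed $A>0$ and $Q\ge1$, I would first invoke Theorem \ref{lambda} to produce a constant $K=K(Q,\delta,\mu_{4+\delta},A,B)$ for which
\[
\Pr\{s_1\ge K\sqrt{Np}\}\le C_{1}N^{-Q}+N^{2}p\,\Pr\{|X_{11}|>A(Np)^{\frac12-\varkappa}\ln N\}.
\]
With this specific value of $K$ in hand, I would then apply Theorem \ref{minmain}, which for every $\tau\le\tau_0$ supplies a constant $C_{2}=C_{2}(Q,\mu_4,K,B)$ with
\[
\Pr\{s_n\le\tau\sqrt{Np}\}\le C_{2}N^{-Q}+\Pr\{s_1>K\sqrt{Np}\}.
\]
Substituting the first display into the second and setting $C=C_{1}+C_{2}$ yields the stated bound. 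The threshold $\tau_0$ is the one provided by Theorem \ref{minmain}; it depends only on the ambient parameters, not on $A$ or $Q$.

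There is no genuine obstacle here; the only subtlety is the order in which the two theorems are called, since $K$ must be determined from $A$ and $Q$ via Theorem \ref{lambda} \emph{before} $C_{2}$ in Theorem \ref{minmain} is fixed. The dependence $C=C(A,\delta)$ advertised in the statement is a mild abuse of notation: the constant in fact depends also on $Q$, $B$, and on $\mu_{4+\delta}$, but each of these is regarded as fixed once the setting of Theorem \ref{lambda} is adopted.
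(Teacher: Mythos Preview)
Your proposal is correct and matches the paper's approach exactly: the paper gives no separate proof for this corollary, simply stating that ``these results immediately imply the following corollary,'' and your argument spells out precisely that immediate combination of Theorems~\ref{lambda} and~\ref{minmain}. Your verification that the hypotheses of Theorem~\ref{minmain} follow from those of Theorem~\ref{lambda}, and your remark about the dependence of the constant, are accurate additions.
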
}
\begin{cor} \label{toy}Assume  the conditions of Theorem \ref{lambda}. In addition assume that there exists a constant $B$ such that for every $N\ge 1$
$$
p=p_N\ge B/\ln^4 N.
$$
Then
\begin{equation*}
\Pr\{\,s_1\ge K\sqrt{Np}\}\le CN^{-Q}+\frac C{\ln^{\delta} N}.
\end{equation*}

\end{cor}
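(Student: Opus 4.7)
The plan is to apply Theorem \ref{lambda} directly and then control the surviving tail probability by a $(4+\delta)$-th moment Markov estimate, using the lower bound on $p$ to absorb the resulting factor of $1/p$ into powers of $\ln N$. First, for the given $Q$ I would fix some $A>0$ and invoke Theorem \ref{lambda} to get
\[
\Pr\{s_1\ge K\sqrt{Np}\}\le CN^{-Q}+N^2p\,\Pr\{|X_{11}|>A(Np)^{1/2-\varkappa}\ln N\},
\]
so the whole task reduces to showing that the second term on the right is bounded by $C/\ln^{\delta}N$.

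Next I would apply Markov's inequality to the tail using the moment bound \eqref{delta}:
\[
\Pr\{|X_{11}|>A(Np)^{1/2-\varkappa}\ln N\}\le\frac{\E|X_{11}|^{4+\delta}}{A^{4+\delta}\,(Np)^{(1/2-\varkappa)(4+\delta)}\,(\ln N)^{4+\delta}}.
\]
The key algebraic observation is that the exponent $\varkappa=\delta/(2(4+\delta))$ is calibrated precisely so that $(1/2-\varkappa)(4+\delta)=2$. Hence the denominator simplifies to $(Np)^2(\ln N)^{4+\delta}$, and after multiplying by $N^2p$ we get the clean cancellation $N^2p/(Np)^2=1/p$, so
\[
N^2p\,\Pr\{|X_{11}|>A(Np)^{1/2-\varkappa}\ln N\}\le\frac{C}{p\,(\ln N)^{4+\delta}}.
\]

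Finally, the hypothesis $p\ge B/\ln^4 N$ yields $1/p\le B^{-1}\ln^4 N$, so the above is at most $C/(\ln N)^{4+\delta-4}=C/\ln^{\delta}N$. Combined with the $CN^{-Q}$ term from Theorem \ref{lambda}, this gives the claimed inequality. There is no substantive obstacle here; the argument is essentially arithmetic, and the only step that requires any thought is recognising the cancellation built into the definition of $\varkappa$, after which the precise hypothesis $p\gtrsim 1/\ln^4 N$ is exactly what is needed to kill the residual $1/p$ while preserving $\delta$ powers of $\ln N$ in the denominator.
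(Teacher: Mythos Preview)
Your proof is correct and follows essentially the same approach as the paper: apply Theorem \ref{lambda}, bound the tail by Markov's inequality with the $(4+\delta)$-th moment, exploit the identity $(\tfrac12-\varkappa)(4+\delta)=2$ to reduce the extra factor to $1/(p\ln^{4+\delta}N)$, and then use the hypothesis $p\ge B/\ln^4 N$ to absorb the $1/p$. Your write-up is in fact somewhat cleaner than the paper's, which presents the same computation more tersely.
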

\begin{proof} Applying Markov's inequality, we obtain
$$
\Pr\{|X_{11}|>A(Np)^{\frac12-\varkappa}\ln N\}\le \frac{\mu_{4+\delta}}{(Np)^2\ln^{4+\delta}N}.
$$
By the conditions  of Corollary \ref{toy}, we get
$$
\Pr\{|X_{11}|>A(Np)^{\frac12-\varkappa}\ln N\}\le \frac{\mu_{4+\delta}}{N^2B^{4+\delta}\ln^{\delta}N}.
$$
The result follows now immediately from theorem \ref{lambda}.
Thus, Corollary \ref{toy} is proved.
\end{proof}
We may consider  random variables $X_{ij}$ for $i=1,\ldots,N; j=1,\ldots,n$, with identical  distributions depending  on $N$.  In this case we have the following result.
\begin{cor}
In addition to conditions of Theorem \ref{lambda} assume that for any $q$ such that $4+\delta\le q\le C\log n$
\begin{equation}\label{moment}
\E|X_{11}|^q\le C_0^qq^q(Np)^{ q(\frac12-\varkappa)-2}.
\end{equation}
Then  for every $Q\ge1$ and $A>0$ there exist  constants $K=K(Q, \delta,\mu_{4+\delta}, A)$ and $C=C(Q, \delta,\mu_{4+\delta}, A)$ such that
\begin{equation*}
\Pr\{\,s_1\ge K\sqrt{Np}\}\le CN^{-Q}.
\end{equation*}
and there exists  a constant $\tau_0>0$ such that for  every 
$\tau\le\tau_0$ , $Q\ge1$  there exists a constant $C=C(Q, \delta,\mu_{4+\delta})$
\begin{equation}
\Pr\{\,s_n\le \tau\sqrt{Np}\}\le CN^{-Q}
\end{equation}

\end{cor}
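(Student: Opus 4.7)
The plan is to deduce this corollary directly from Theorem \ref{lambda} and Theorem \ref{minmain} by using the extra moment hypothesis \eqref{moment} to kill the residual truncation term $N^2 p\,\Pr\{|X_{11}|>A(Np)^{\frac12-\varkappa}\ln N\}$ appearing in Theorem \ref{lambda}. In other words, all of the heavy lifting is already in the two theorems cited; what remains is a one-step optimisation of Markov's inequality in the parameter $q$.

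First I would apply Markov's inequality with exponent $q$ (to be chosen), using the bound \eqref{moment}:
\begin{equation*}
\Pr\{|X_{11}|>A(Np)^{\frac12-\varkappa}\ln N\}\le \frac{\E|X_{11}|^q}{A^q(Np)^{q(\frac12-\varkappa)}\ln^q N}\le \frac{1}{(Np)^2}\Bigl(\frac{C_0\,q}{A\ln N}\Bigr)^q.
\end{equation*}
Then I would pick $q=\lceil Q\ln N\rceil$, which lies in the admissible range $4+\delta\le q\le C\log n$ for $N$ large, and choose $A$ large enough (depending on $Q$, $C_0$) so that $C_0 q/(A\ln N)\le 1/e$. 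This yields
\begin{equation*}
\Pr\{|X_{11}|>A(Np)^{\frac12-\varkappa}\ln N\}\le \frac{e^{-q}}{(Np)^2}\le \frac{N^{-Q}}{(Np)^2}.
\end{equation*}
Multiplying by the prefactor $N^2 p$ and using $Np\ge 1$ from \eqref{np} produces $N^2 p\cdot \Pr\{\ldots\}\le N^{-Q}/(Np)\le N^{-Q}$.

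Plugging this bound back into Theorem \ref{lambda} gives $\Pr\{s_1\ge K\sqrt{Np}\}\le C N^{-Q}$, which is the first claim. For the second claim, I would simply invoke Theorem \ref{minmain}, whose right-hand side is $CN^{-Q}+\Pr\{s_1>K\sqrt{Np}\}$; the first half of the corollary I have just proved shows the second summand is also $O(N^{-Q})$, so after relabelling the constant $C$ we obtain $\Pr\{s_n\le \tau\sqrt{Np}\}\le CN^{-Q}$ for all $\tau\le\tau_0$.

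There is essentially no obstacle: the only minor point requiring care is verifying that the optimising choice $q\asymp \ln N$ indeed lies in the range $[4+\delta,\,C\log n]$ required by \eqref{moment}; this is automatic since $\log n\sim \log N$ as $n/N\to y_0\in(0,1)$. All constants that appear (including the new $K$ and $C$) end up depending only on $Q,\delta,\mu_{4+\delta}$, and on the constants $C_0,B$ hidden in \eqref{moment} and \eqref{np}, as required by the statement.
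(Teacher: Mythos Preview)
The paper does not spell out a proof for this corollary; it is stated and left to the reader, the only hint being the remark inside the proof of Theorem~\ref{lambda} that ``for $q\sim \ln N$ inequality \eqref{5} coincide with condition \eqref{moment}.'' Your approach---apply Markov's inequality at order $q\asymp\ln N$, use \eqref{moment} to cancel the factor $(Np)^{q(\frac12-\varkappa)}$, and feed the resulting tail bound back into Theorems~\ref{lambda} and~\ref{minmain}---is precisely the intended one.

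There is one arithmetic slip to repair. You write $N^{2}p\cdot\Pr\{\ldots\}\le N^{-Q}/(Np)$, but in fact $N^{2}p\cdot(Np)^{-2}=1/p$, not $1/(Np)$, so what your computation actually yields is $N^{2}p\,\Pr\{\ldots\}\le N^{-Q}/p$, which is \emph{larger} than $N^{-Q}$ since $p\le 1$. The fix is immediate: take $q=\lceil (Q+1)\ln N\rceil$ rather than $\lceil Q\ln N\rceil$ (still in the admissible range $[4+\delta,\,C\log n]$). The same Markov estimate then gives $N^{2}p\,\Pr\{\ldots\}\le N^{-(Q+1)}/p=N^{-Q}/(Np)$, and now condition~\eqref{np} (which certainly forces $Np\ge 1$) delivers the bound $\le N^{-Q}$ you wanted. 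With this correction the rest of your argument goes through unchanged.
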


\section{Proof of Theorem \ref{lambda}}
Let $\widetilde X_{ij}$ denote truncated random variables $X_{ij}$, i.e.
$$
\widetilde X_{ij}=X_{ij}\mathbb I\{|X_{ij}|\le A(Np)^{\frac12-\varkappa}\ln N\},
$$
where $\mathbb I\{B\}$ denotes the indicator of an event $B$. Let $\widetilde {\mathbf X}$ denote the matrix with entries $\xi_{ij}\widetilde X_{ij}$. By $\|\mathbf A\|$ we  denote the operator norm of a matrix $\mathbf A$.
First we estimate the spectral norm of the matrix $\E \widetilde{\mathbf X}$. Since $X_{ij}$ and $\xi_{ij}$ are identically distributed random variables we have
$$
\|\E\widetilde {\mathbf X}\|=np|\E\widetilde X_{11}|.
$$
By condition \eqref{delta}, we have
$$
|\E \widetilde X_{11}|=|\E X_{11}\mathbb I\{|X_{11}|>A(Np)^{\frac12-\varkappa}\ln N\}|\le \frac C{A^3(Np)^{\frac32+\varkappa}}.
$$
From here we get the bound
\begin{equation}\label{aa}
\|\E\widetilde{\mathbf X}\|\le CA^{-3}(Np)^{-\frac12-\varkappa}.
\end{equation}
We  consider now the centered and truncated random variables $\widehat X_{ij}=\widetilde X_{ij}-\E\widetilde X_{ij}$ for $i=1,\ldots N, j=1,\ldots n$, and the matrix $\mathbf {\widehat X}=(\xi_{ij}\widehat X_{ij}$). Let $\widehat s_1\ge\widehat s_2\ldots\ge \widehat s_n$ denote the singular values of the matrix $\widehat{\mathbf X}$ and resp. let $\widetilde s_1\ge\widetilde s_2\ldots\ge \widetilde s_n$ denote the singular values of the matrix $\widetilde{\mathbf X}$. 
Note that
\begin{align}
\Pr\{s_1\ne \widetilde s_1\}\le \Pr\{\mathbf X\ne\widetilde{\mathbf X}\}&\le \sum_{i=1}^N\sum_{j=1}^np\Pr\{\widetilde X_{ij}\ne X_{ij}\}\notag\\&=nNp\Pr\{|X_{11}|>A(Np)^{\frac12-\varkappa}\ln N\}
\end{align}

Furthermore, we have
\begin{equation}
 \widetilde s_1\le\widehat s_1+\|\E\widetilde {\mathbf X}\|.
\end{equation}
According to \eqref{aa} we may assume that
\begin{equation}
\|\E\widetilde {\mathbf X}\|\le \gamma\sqrt{Np}
\end{equation}
for sufficiently small $\gamma>0$.
We may write now
\begin{equation}
\Pr\{s_1>K\sqrt{Np}\}\le \Pr\{\widehat s_1>\frac12K\sqrt{Np}\}+N^2p\Pr\{|X_{11}|>A(Np)^{\frac12-\varkappa}\ln N\}
\end{equation}
Note that
\begin{align}
\widehat\sigma^2_n&=\E{\widehat X}_{11}^2=\E(\widetilde X_{11})^2-(\E\widetilde X_{11})^2\notag\\&=1-\E X_{11}^2\mathbb I\{|X_{11}|>A(Np)^{\frac12-\varkappa}\ln N\}-(\E X_{11}\mathbb I\{|X_{11}|>A(Np)^{\frac12-\varkappa}\ln N\})^2.
\end{align}
It is easy that
\begin{align}
|1-\sigma_n|\le
|1-\sigma_n^2|\le \frac{2\mu_{4+\delta}}{A^{2+\delta}(Np)^{(2+\delta)(\frac12-\varkappa)}}.
\end{align}
Without loss of generality we may assume that $\sigma_n\ge \frac12$. Consider now the matrix $\breve{\mathbf X}=\frac1{\sigma_n}\widehat{\mathbf X}$. Let $\breve s_1$ denote the largest singular value of {the}  matrix $\breve{\mathbf X}$. Then
\begin{equation}
\Pr\{\widehat s_1>K\sqrt{Np}\}\le \Pr\{{\breve s_1}>2K\sqrt{Np}\}.
\end{equation}

During the rest of the proof of Theorem \ref{lambda} we shall consider the matrix $\mathbf X$ with entries $\xi_{ij}X_{ij}$, $i=1,\ldots,N$ $j=1,\ldots,n$ satisfying the following conditions $(CI)$:
\begin{itemize}
	\item $\xi_{ij}$ are independent Bernoulli r.v.'s with $\E\xi_{ij}=p\,(=p_N)$; 
	\item $X_{ij}$ are i.i.d. r.v.'s for $1\le i\le N, 1\le j\le n$, such that
	$\E X_{11}=0$, $\E|X_{11}|^{4+\delta}\le \mu_{4+\delta}$ and 
	$$
	|X_{11}|\le A(Np)^{\frac12-\varkappa}\ln N\quad\text{  a.s.}
	$$
\end{itemize}
We use the following  result of Seginer (see \cite[Corollary 2.2]{Seginer}).
\begin{prop}There exists a constant $A$ such that for any $N,n\ge 1$, any $q\le 2\log\max\{n,N\}$, and any $N\times n$ random matrix $\mathbf X=(X_{ij})$ where $X_{ij}$
are i.i.d. zero mean random variables, the following inequality holds:
\begin{align}
\max\Big\{\E\max_{1\le i\le N}\|\mathbf X_{i\cdot}\|_2^q,&\E\max_{1\le j\le n}\|\mathbf X_{\cdot j}\|_2^q\Big\}\notag \le \E\|\mathbf X\|^q\notag \\ &\le (2A)^q\Big(\E\max_{1\le i\le N}\E\|\mathbf X_{i\cdot}\|_2^q+\max_{1\le j\le n}\|\mathbf X_{\cdot j}\|_2^q\Big).
\end{align}
Here $\mathbf X_{i\cdot}$, resp. $\mathbf X_{\cdot j}$, denote the $i$-th row,  resp. the $j$-th column of $\mathbf X$.
\end{prop}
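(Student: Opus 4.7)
The plan is to handle the two inequalities separately. For the lower bound, since $\|\mathbf X\|=\sup_{\|u\|_2=\|v\|_2=1}\langle \mathbf X v,u\rangle$, choosing $v$ to be the $j$-th standard basis vector gives $\|\mathbf X\|\ge\|\mathbf X_{\cdot j}\|_2$ for every $j$, and applying the same reasoning to $\mathbf X^*$ gives $\|\mathbf X\|\ge\|\mathbf X_{i\cdot}\|_2$ for every $i$. Maximizing over $i$ and $j$, raising to the $q$-th power, and taking expectations completes that side.

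For the upper bound I would first symmetrize. Since the $X_{ij}$ are i.i.d.\ with zero mean, a standard Jensen argument shows that $\E\|\mathbf X\|^q$ and $\E\|\mathbf X-\mathbf X'\|^q$ (with $\mathbf X'$ an independent copy) differ by a factor at most $2^q$, while the row and column norms are affected by a similar factor. The entries of $\mathbf X-\mathbf X'$ are symmetric, so conditioning on $|X_{ij}-X_{ij}'|$ produces the representation $\epsilon_{ij}a_{ij}$ with independent Rademacher signs $\epsilon_{ij}$ and fixed nonnegative $a_{ij}$. It therefore suffices to bound
\[
\E_{\epsilon}\sup_{\|u\|_2=\|v\|_2=1}\Big|\sum_{i,j}\epsilon_{ij}\,a_{ij}\,u_i v_j\Big|^q
\]
in terms of $\max_i\|a_{i\cdot}\|_2$ and $\max_j\|a_{\cdot j}\|_2$.

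The main step is a dyadic peeling decomposition of the sphere: partition the coordinates of $u$ (resp.\ $v$) into level sets where $|u_i|$ (resp.\ $|v_j|$) lies in a dyadic window, and split the supremum over $S^{N-1}\times S^{n-1}$ into contributions from pairs of such level sets. Crucially, the unit-norm constraint forces the level set at scale $2^{-k}$ to have at most $2^{2k}$ coordinates. On each rectangle Khintchine's inequality applied to the Rademacher bilinear form bounds the conditional $L^q$-norm by a constant times the Hilbert--Schmidt norm of the submatrix of $(a_{ij})$ supported on those coordinates, which is in turn bounded by $\sqrt{|I|}\max_{i\in I}\|a_{i\cdot}\|_2$ or $\sqrt{|J|}\max_{j\in J}\|a_{\cdot j}\|_2$, whichever is smaller. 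A geometric summation over the $O(\log\max\{N,n\})$ relevant scales yields the constant $(2A)^q$, and the hypothesis $q\le 2\log\max\{N,n\}$ is exactly what lets this series close up. The main obstacle is precisely this bookkeeping: a softer approach via the non-commutative Khintchine inequality or Tropp's matrix Bernstein inequality would produce an extra $\sqrt{\log\max\{N,n\}}$ factor, which is the loss that Seginer's sharper decomposition removes.
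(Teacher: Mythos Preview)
The paper does not prove this proposition at all; it is quoted as Corollary~2.2 of Seginer~\cite{Seginer} and then used as a black box in the proof of Theorem~\ref{lambda}. So there is no in-paper argument to compare your attempt against.

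As for the sketch itself: the lower bound is immediate, and the symmetrization reduction is standard and correct. Your upper-bound strategy via dyadic peeling of the unit sphere is a legitimate route, and in fact different in flavour from Seginer's own argument, which proceeds by expanding $\E\,\mathrm{Tr}(\mathbf X\mathbf X^*)^m$ for integer $m$ and counting the contributing paths combinatorially. The step that needs substantially more than you have written is the passage from a fixed-$(u,v)$ Khintchine estimate to the supremum over all $(u,v)$ in a pair of dyadic shells: Khintchine controls $\E_\epsilon\bigl|\sum_{i,j}\epsilon_{ij}a_{ij}u_iv_j\bigr|^q$ for a single pair, not $\E_\epsilon\sup_{u,v}|\cdot|^q$. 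Closing that gap requires a net on each shell together with a union bound whose cardinality is beaten by the subgaussian tail from Khintchine, and verifying that the resulting constants sum geometrically under the restriction $q\le 2\log\max\{N,n\}$ is precisely the nontrivial content. You label this as ``bookkeeping'', but it is the whole substance of the result; as written, your proposal is an outline rather than a proof.
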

\begin{proof}[Proof of Theorem \ref{lambda}]Note that $s_1=\|\mathbf X\|$. {Using the notations introduced above, we now estimate $\E\|\mathbf X_{i\cdot}\|^q$.
 By the definition of }$\mathbf X$ we have
\begin{equation}\label{1}
\E\|\mathbf X_{i\cdot}\|_2^q=\E\Big(\sum_{k=1}^nX_{ik}^2\xi_{ik}\Big)^{\frac q2}\le 2^{q-1}\Big(\sum_{k=1}^n\E X_{ik}^2\xi_{ik}\Big)^{\frac q2}+2^{q-1}\E\Big|\sum_{k=1}^n(X_{ik}^2-1)\xi_{ik}\Big|^{\frac q2}.
\end{equation}
Note that
\begin{equation}\label{2}
\E X_{ik}^2\xi_{ik}=p.
\end{equation}
Now, applying Rosenthal's inequality we get
\begin{align}\label{3}
\E\Big|\sum_{k=1}^n(X_{ik}^2-1)\xi_{ik}\Big|^{\frac q2}\le C^q\Big(q^{\frac q4}\Big(\sum_{k=1}^n\E(X_{ik}^2-1)^2\xi_{ik}\Big)^{\frac q4}+q^{\frac q2}p\sum_{k=1}^n\E|X_{ik}^2-1|^{\frac q2}\Big),
\end{align}
which implies 
\begin{equation}\label{4}
\E\Big|\sum_{k=1}^n(X_{ik}^2-1)\xi_{ik}\Big|^{\frac q2}\le C^q\big(q^{\frac q4}(Np)^{\frac q4}+q^{\frac q2}Np\E|X_{11}|^q\big).
\end{equation}
By assumptions $(CI)$, we have
\begin{equation}\label{5}
\E|X_{11}|^q\le C^q(Np)^{\frac q2-q\varkappa-2}\ln^{q-4-\delta} N.
\end{equation}
Note that for $q\sim \ln N$ inequality \eqref{5} coincide with condition \eqref{moment}.
Combining inequalities \eqref{1}--\eqref{5}, we now get
\begin{align*}
\E\|\mathbf X_{i\cdot}\|^q_2\le C^q(Np)^{\frac q2}\Big(1+\left(\frac{q}{Np}\right)^{\frac q4}+
N^{-1}p^{-1}\ln^{-(4+\delta)}N\left(\frac{q\ln^2 N}{(Np)^{2\varkappa}}\right)^\frac q2\Big).
\end{align*}
Taking into account  \eqref{np}, {as well as} $q\le C\log n$, we obtain, for $q\le 2\log\max\{n,N\}$,
\begin{equation*}
\E\|\mathbf X_{i\cdot}\|^q_2\le C^q(Np)^{\frac q2}.
\end{equation*} 
{A similar bound holds} for $\E\|\mathbf X_{\cdot j}\|^q$.
We may now write 
\begin{equation*}
\E\|\mathbf X\|^q\le C^qN(Np)^{\frac q2}.
\end{equation*}
Taking {$K\gg C$} and applying Markov's inequality, {the claim follows}.
Thus Theorem \ref{lambda} is proved.
\end{proof}
 \section
 {{Smallest singular values} }

 We shall now prove  Theorem \ref{minmain}  using an approach developed by Litvak, Pajor, Rudelson \cite{LPR}, Rudelson and Vershynin in \cite{RudVer} for rectangular matrices for the case $p=1$ and G\"otze and Tikhomirov in \cite{GT:2010} for the sparse dilute Wigner matrices. Denote by $\mathcal S^{(n-1)}$  the unit sphere in $\mathbb R^{n}$.
  Let $\mathbf x=(x_1,\ldots,x_n)\in\mathcal S^{(n-1)}$ be a fixed unit vector and $\mathbf X$ be a matrix defined in \eqref{sparerectmat}.

We divide the vectors on the sphere into two parts: compressible and incompressible vectors recalling the definition.
 \begin{df}
  Let $\delta,\rho\in(0,1)$. 
A vector $\mathbf x\in\mathbb R^n$ is called sparse if $|{\rm supp}(\mathbf x)|\le\delta n$. A vector $\mathbf x\in\mathcal S^{(n-1)}$ is called compressible if $\mathbf x$ is within Euclidean distance $\rho$ from the set of all sparse vectors. A vector $\mathbf x\in\mathcal S^{(n-1)}$ is called incompressible if it is not compressible. The sets of compressible and incompressible vectors will be denoted by $\text{\it Comp}(\delta,\rho)$ and $\text{\it Incomp}(\delta,\rho)$.
\end{df}
Note that
\begin{equation*}
s_n=\inf_{\mathbf x\in\mathcal S^{(n-1)}}\|\mathbf X\mathbf x\|_2
\end{equation*}
{and}
\begin{equation}\label{in1}
\Pr\{s_n\le \tau\sqrt{Np}\}\le \Pr\{\inf_{x\in\text{\it Comp}(\delta,\rho)}\|\mathbf X\mathbf x\|_2\le \tau\sqrt{Np}\}+\Pr\{\inf_{x\in\text{\it Incomp}(\delta,\rho)}\|\mathbf X\mathbf x\|_2\le \tau\sqrt{Np}\},
\end{equation}
for some $\delta,\rho\in(0,1)$ and $\tau>0$, { not depending} on $n$.

For  sparse matrices with $p=p_N\to 0$ as $N\to\infty$ {we cannot  directly estimate the first term on the right hand side of \eqref{in1} using  
the well-known two step approach of estimating
$\Pr\{\|\mathbf X\mathbf x\|_2\le \tau\sqrt{Np}\}$ for a  fixed vector
$\mathbf x\in\mathcal S^{(n-1)}$ followed by a  union bound for the some $\varepsilon$-net of $\text{\it Comp}(\delta,\rho)$ and  arriving at a bound for the infimum of $\mathbf x\in \text{\it Comp}(\delta_n,\rho)$ with $\delta_n\sim p$ going to zero.
 The Rudelson - Vershynin  methods for incompressible vectors won't work in this case.
In order to estimate $\Pr\{\inf_{x\in\text{\it Comp}(\delta,\rho)}\|\mathbf X\mathbf x\|_2\le \tau\sqrt{Np}\}$ with some $\delta>0$ which does not not depend on $n$, we shall use a method  developed in  G\"otze-Tikhomirov \cite{GT:2010}. This is based on  a recurrence approach which allows us to increase $\delta_N$ step by step  $Np$ times arriving   in $\log N$ steps at an estimate  of $\delta>\delta_0$ which does not depend on $N$. The details of this approach  will be described  in Section \ref{s3}.

In Section \ref{s4} we {shall derive bounds for $\Pr\{\inf_{x\in\text{\it Incomp}(\delta,\rho)}\|\mathbf X\mathbf x\|_2\le \tau\sqrt{Np}\}$.}
\subsection{Compressible vectors}\label{s3}Let $L$ {be an integer such that}
\begin{equation}\label{cond1}
\left(\frac{ \delta_0Np}{|\log p|+1}\right)^{L-1}\le p^{-1}\le \left(\frac{\delta_0 Np}{|\log p|+1}\right)^{L},
\end{equation}
where $\delta_0\in(0,1)$ {denotes} some constant independent on $N$.
Note that under {the} conditions of Theorem \ref{minmain} 
\begin{equation}
L\le c\log N/\log\log N
\end{equation}
with {a} constant $c=c(\delta_0).$
 We introduce {a}  set of numbers $p_{\nu N}$ and $\delta_{\nu N}$, for $\nu=1,\ldots , L$, as follows
\begin{equation*}
p_{\nu N}=(Np)\delta_{\nu-1 N} \text{ and  } \delta_{\nu N}=\delta_0 p_{\nu N}/(1+|\log p_{\nu N}|).
\end{equation*}
Here
\begin{equation*}
p_{0N}=p\text{ and }\delta_{0N}=\delta_0p/(1+|\log p|).
\end{equation*}
{Furthermore, introduce as well }
\begin{equation*}
\widehat p_{\nu N}=\left(\frac{Np\delta_0}{|\log p|+1}\right)^{\nu}p\text{ and }\widehat\delta_{\nu N}:=\left(\frac{\delta_0Np}{|\log p|+1}\right)^{\nu-1}\frac{\delta_0p}{|\log p|+1}.
\end{equation*}
\begin{lem}
The following inequalities hold
\begin{equation}
p_{\nu,N}\ge \widehat p_{\nu}
\end{equation}
and 
\begin{equation}
\delta_{\nu,N}\ge \widehat\delta_{\nu,N},
\end{equation}
for $\nu=1,\ldots,N$
\end{lem}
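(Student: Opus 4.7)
The plan is to proceed by induction on $\nu$, exploiting the monotonicity of the auxiliary function $f(x):=x/(1+|\log x|)$ on $(0,\infty)$ (which follows by a routine check of $f'$ separately on $(0,1)$ and $(1,\infty)$) together with a quantitative use of \eqref{cond1}. Setting $A:=\delta_0 Np/(1+|\log p|)$, one has $\widehat p_{\nu,N}=A^\nu p$ and $\widehat\delta_{\nu,N}=A^{\nu-1}\delta_0 p/(1+|\log p|)$; \eqref{cond1} gives the two-sided estimate $(L-1)\log A\le|\log p|\le L\log A$, and in particular $A\ge 1$.

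For the base case $\nu=1$, direct computation yields $p_{1,N}=Np\cdot\delta_{0,N}=Ap=\widehat p_{1,N}$ exactly, and $\delta_{1,N}=\delta_0 f(p_{1,N})\ge\delta_0 f(p)=\widehat\delta_{1,N}$ by monotonicity. For the inductive step, I would assume $p_{\nu,N}\ge\widehat p_{\nu,N}$ and first deduce, by monotonicity of $f$, that $\delta_{\nu,N}\ge\delta_0 f(\widehat p_{\nu,N})=\delta_0 A^\nu p/(1+|\log(A^\nu p)|)$. The strategy is to upgrade this to $\delta_{\nu,N}\ge A\widehat\delta_{\nu,N}$, for then $p_{\nu+1,N}=Np\cdot\delta_{\nu,N}\ge Np\cdot A\widehat\delta_{\nu,N}=A\widehat p_{\nu,N}=\widehat p_{\nu+1,N}$, closing the induction on $p$; the second lemma inequality $\delta_{\nu,N}\ge\widehat\delta_{\nu,N}$ then follows a fortiori since $A\ge 1$. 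The required upgrade reduces to verifying the purely arithmetic estimate $1+|\log(A^\nu p)|\le 1+|\log p|$.

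This logarithmic comparison is the only real obstacle. In the regime $A^\nu p\le 1$ it is trivial, since $|\log(A^\nu p)|=|\log p|-\nu\log A\le|\log p|$. The delicate case is $A^\nu p>1$, where $|\log(A^\nu p)|=\nu\log A-|\log p|$; here I would use the left half of \eqref{cond1} to write $\log A\le|\log p|/(L-1)$, so that $\nu\log A\le L|\log p|/(L-1)\le 2|\log p|$ for $\nu\le L$ and $L\ge 2$, and the required bound follows. This is the only place where the two-sided nature of \eqref{cond1} is genuinely needed, and it implicitly restricts the meaningful range of the lemma to $\nu\in\{1,\ldots,L\}$ (with $L\le c\log N/\log\log N$) rather than the literal $\{1,\ldots,N\}$ appearing in the statement.
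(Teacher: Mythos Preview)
Your proposal is correct and follows essentially the same inductive approach as the paper: both arguments hinge on replacing the denominator $1+|\log(\cdot)|$ in the recursion by $1+|\log p|$ and then iterating. The paper does this via an auxiliary induction establishing $p_{\nu,N}\ge p$ first, whereas you package the same comparison through monotonicity of $f(x)=x/(1+|\log x|)$; the substance is identical.

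One small simplification: your worry about needing $L\ge 2$ in the ``delicate case'' is unnecessary. The inductive step (passing from $p_{\nu,N}\ge\widehat p_{\nu,N}$ to $p_{\nu+1,N}\ge\widehat p_{\nu+1,N}$) only runs for $\nu\le L-1$, and for such $\nu$ the left half of \eqref{cond1} already gives $A^{\nu}p\le A^{L-1}p\le 1$, so the case $A^{\nu}p>1$ never arises there and the bound $|\log(A^{\nu}p)|\le|\log p|$ is immediate. At $\nu=L$ you only need the weaker conclusion $\delta_{L,N}\ge\widehat\delta_{L,N}$ (not the ``upgraded'' $A\widehat\delta_{L,N}$), and this follows directly: the right half of \eqref{cond1} gives $\widehat p_{L,N}=A^{L}p\ge 1$, so $\delta_{L,N}\ge\delta_0 f(\widehat p_{L,N})\ge\delta_0 f(1)=\delta_0$, while $\widehat\delta_{L,N}=A^{L-1}\delta_0 p/(1+|\log p|)\le\delta_0/(1+|\log p|)\le\delta_0$ by the left half of \eqref{cond1}.
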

\begin{proof} By condition of Theorem \ref{minmain}, 
\begin{equation}
\frac{Np}{1+|\ln p|}\ge B\ln N.
\end{equation}
Without loss of generality we may assume that 
\begin{equation}
\frac{Np\delta_0}{1+|\ln p|}>1.
\end{equation}
It is straightforward to check now that $p_{\nu,N}\ge p$, for $\nu=1,\ldots,N$.  In fact,  for $\nu=1$ it is easy. Assume that for some $\nu=1,\ldots,N-1$ the inequality $p_{\nu-1,N}\ge p$ holds.
Then
\begin{equation}
p_{\nu,N}=\frac{Np\delta_0p_{\nu-1,N}}{1+|\ln p_{n-1,N}|}\ge \frac{Np\delta_0}{1+|\ln p|}p_{\nu-1,N}\ge\frac{Np\delta_0}{1+|\ln p|}p\ge p.
\end{equation}
We may write now the following inequalities
\begin{equation}
\delta_{\nu,N}\ge \frac{\delta_0}{1+|\ln p|}p_{\nu,N}
\end{equation}
and
\begin{equation}
p_{\nu,N}\ge \frac{Np\delta_0}{1+|\ln p|}p_{\nu-1,N},
\end{equation}
for $\nu=1,\ldots,N$.
Applying induction for the last inequality, we get, for $\nu=1,\ldots, N$,
\begin{equation}
p_{\nu,N}\ge\widehat p_{\nu,N}.
\end{equation}
The last inequality implies that, for $\nu=1,\ldots,N$,
\begin{equation}
\delta_{\nu,N}\ge \frac{\delta_0}{1+|\ln p|}\widehat p_{\nu-1,N}=\left(\frac{Np\delta_0}{1+|\ln p|}\right)^{\nu-1}\frac{p\delta_0}{1+|\ln p|}=\widehat\delta_{nu,N}.
\end{equation}
Thus, lemma is proved.
\end{proof}
\begin{cor}\label{ineq}There exist  constants $\gamma_0>0, \gamma_1>0$ such that
\begin{equation}\label{gamma_0}
\delta_{L,N}\ge\gamma_0\text{  and }p_{LN}\ge \gamma_1.
\end{equation}
\end{cor}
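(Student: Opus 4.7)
The plan is to deduce both inequalities directly from the preceding lemma combined with the defining relation \eqref{cond1} for $L$. Write $R := \delta_0 Np/(1 + |\log p|)$ for brevity, so that $\widehat p_{L,N} = R^L p$ and $\widehat \delta_{L,N} = R^{L-1}\cdot \delta_0 p/(1+|\log p|)$.

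First I would establish $p_{L,N}\ge \gamma_1$. The lemma provides $p_{L,N}\ge \widehat p_{L,N} = R^L p$, and the right half of \eqref{cond1} reads $R^L\ge p^{-1}$, which immediately gives $\widehat p_{L,N}\ge 1$ and hence $p_{L,N}\ge 1$. One may take $\gamma_1 = 1$.

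Next I would deduce $\delta_{L,N}\ge \gamma_0$. By definition, $\delta_{L,N} = \delta_0\, p_{L,N}/(1 + |\log p_{L,N}|)$. Since $p_{L,N}\ge 1$ we have $|\log p_{L,N}| = \log p_{L,N}$, and the elementary inequality $x\ge 1 + \log x$ valid for $x\ge 1$ (proved by noting that $g(x) = x - 1 - \log x$ vanishes at $x = 1$ and has $g'(x) = 1 - 1/x \ge 0$) yields $p_{L,N}/(1 + \log p_{L,N})\ge 1$. Therefore $\delta_{L,N}\ge \delta_0$, and one may take $\gamma_0 = \delta_0$.

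The substantive concern I anticipate is not in either of the two steps above, but in ensuring that the preceding lemma is genuinely valid at the level $\nu = L$. The induction in that lemma replaces $1 + |\log p_{\nu-1,N}|$ by $1 + |\log p|$, which requires $p_{\nu-1,N}\in [p,\,1/p]$; the lower endpoint is preserved by the induction, but once $p_{\nu,N}$ crosses $1$ one must verify that it does not overshoot $1/p$ before reaching step $L$. A one-step growth control via $p_{\nu,N} = (Np)\delta_{\nu-1,N}$ together with $\widehat p_{L-1,N}\le 1$ (which follows from the left half of \eqref{cond1}) should close this gap, so I expect this to be a bookkeeping verification rather than a genuine obstacle.
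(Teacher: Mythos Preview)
Your proof is correct and matches the paper's intent: the corollary is stated without proof as an immediate consequence of the preceding lemma together with the defining inequality \eqref{cond1} for $L$, and your two steps (first $p_{L,N}\ge\widehat p_{L,N}=R^{L}p\ge1$ from $R^{L}\ge p^{-1}$, then $\delta_{L,N}=\delta_0\,p_{L,N}/(1+\log p_{L,N})\ge\delta_0$ via $x\ge1+\log x$ for $x\ge1$) carry this out cleanly.

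On your closing concern about the lemma itself: you are right that the paper's inductive step tacitly uses $|\log p_{\nu-1,N}|\le|\log p|$, which requires $p_{\nu-1,N}\le p^{-1}$ and is not verified there. However, your suggested patch via $\widehat p_{L-1,N}\le1$ does not bound the \emph{actual} $p_{\nu,N}$ from above, so it does not close the gap as written. A cleaner route, and one that is already implicit in your own argument, is this: as soon as some $p_{\nu_0,N}\ge1$ (in particular if it overshoots $p^{-1}$), your inequality $x/(1+\log x)\ge1$ gives $\delta_{\nu_0,N}\ge\delta_0$, hence $p_{\nu_0+1,N}=Np\,\delta_{\nu_0,N}\ge Np\,\delta_0\ge1$, and by induction the sequence stays $\ge1$ from that point on. Thus $p_{L,N}\ge1$ holds in every case, and your derivation of the corollary goes through unchanged.
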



Introduce the sets
\begin{equation*}
\mathcal C_{\nu}:=\text{Comp}(\delta_{\nu,N},\rho), \quad{\mathcal IC}_{\nu}:=\text{Incomp}(\delta_{\nu,N},\rho),\quad \nu=0,\ldots,L.
\end{equation*}
Note  that $L\ge 1$ for $Np^2/(|\log p|+1)\le D$ with some constant $D$.
The case $Np^2/(|\log p|+1)\ge D$ {will we treated} separately.
In what follows we shall assume that $L\ge 1$.
{\begin{df}The L\'evy concentration function of a random variable $\xi$ is defined for $\varepsilon>0$ as
\begin{equation}
\mathcal L(\xi,\varepsilon)=\sup_{v\in\mathbb R}\Pr\{|\xi-v|\le\varepsilon\}.
\end{equation}
\end{df}}

{By $\mathbf P_{\mathbb E}$ we denote the orthogonal projection in $\mathbb R^n$ onto a subspace $\mathbb E$. Similarly, by $\mathbf P_{\mathbb J}$ we denote the orthogonal projection onto $\mathbb R^{\mathbb J}$, where $\mathbb J\subset\{1,2,\ldots,n\}$. }
 
 We reformulate and prove some auxiliary results from \cite{RudVer} below for our sparsity model.

First we prove an analog of \cite[Lemma 3.2]{RudVer}.
\begin{lem}\label{concentr} Let $\mathbf x\in{\mathcal {IC}}_{\nu}$, $\nu=1,\ldots, L$.
Let 
$$
\zeta_j=\sum_{k=1}^nx_k\xi_{jk}X_{jk},\quad j=1,\ldots,N.
$$
Then there exists some absolute constant $A$ such that
\begin{equation}
\mathcal L(\frac1{\sqrt p}\zeta_j, \frac{\rho}2)\le1- A\rho^4p_{\nu N}.
\end{equation}
\end{lem}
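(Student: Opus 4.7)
The plan is to reproduce, in the sparse setting, the classical Rudelson--Vershynin anti-concentration argument for sums over incompressible vectors, following the strategy used in \cite{GT:2010} for Wigner-type sparse matrices.

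First, I would invoke the standard spread-coordinates characterization of incompressible vectors to extract from $\mathbf x\in\mathcal{IC}_\nu$ a subset $\sigma\subset\{1,\ldots,n\}$ with $|\sigma|\ge c_1\rho^2\delta_{\nu N}n$ and $\rho/\sqrt{2n}\le|x_k|\le 1/\sqrt{\delta_{\nu N}n}$ for $k\in\sigma$. Since the entries $\xi_{jk}X_{jk}$ of the $j$-th row are mutually independent across $k$, the sum splits as $\zeta_j=\zeta_j^\sigma+\zeta_j^{\sigma^c}$ into two independent pieces, so that $\mathcal L(\zeta_j/\sqrt p,\rho/2)\le\mathcal L(\zeta_j^\sigma/\sqrt p,\rho/2)$; it therefore suffices to bound the concentration of the restricted sum $\zeta_j^\sigma=\sum_{k\in\sigma}x_k\xi_{jk}X_{jk}$.

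Second, for any fixed $v\in\mathbb R$, I would lower-bound $\Pr\{|\zeta_j^\sigma/\sqrt p-v|>\rho/2\}$ by a disjoint union of favorable events. For each $k^*\in\sigma$ set
\[
E_{k^*}=\bigl\{\xi_{jk^*}=1,\ \xi_{j\ell}=0\ \forall\ell\in\sigma\setminus\{k^*\},\ |X_{jk^*}-v\sqrt p/x_{k^*}|>\rho\sqrt p/(2|x_{k^*}|)\bigr\}.
\]
On $E_{k^*}$ one has $\zeta_j^\sigma/\sqrt p=x_{k^*}X_{jk^*}/\sqrt p$, hence $|\zeta_j^\sigma/\sqrt p-v|>\rho/2$. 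By Paley--Zygmund applied to $X_{jk}^2$ (using $\E X_{jk}^2=1$ and $\E|X_{jk}|^{4+\delta}\le C$), one has $\mathcal L(X_{jk},\alpha)\le 1-c_0$ for some absolute $\alpha_0,c_0>0$ whenever $\alpha\le\alpha_0$; provided $k^*$ can be chosen with $|x_{k^*}|\ge\rho\sqrt p/(2\alpha_0)$, the marginal constraint inside $E_{k^*}$ has probability at least $c_0$. Truncating $\sigma$ if necessary so that $|\sigma|\le 1/p$ (which keeps $(1-p)^{|\sigma|-1}$ bounded below by a constant) and summing over $k^*$ yields
\[
\Pr\Bigl(\bigcup_{k^*\in\sigma}E_{k^*}\Bigr)\ge c_0\,|\sigma|\,p\,(1-p)^{|\sigma|-1}\gtrsim\rho^2\delta_{\nu N}np\gtrsim\rho^4 p_{\nu N},
\]
where the last estimate uses $\delta_{\nu N}np\gtrsim(\log N)\,p_{\nu N}$, which follows from the definition $\delta_{\nu N}=\delta_0 p_{\nu N}/(1+|\log p_{\nu N}|)$ together with the hypothesis $Np\ge B\log^2 N$.

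The main obstacle is the complementary regime in which the upper range $1/\sqrt{\delta_{\nu N}n}$ of $|x_{k^*}|$ is strictly below the threshold $\rho\sqrt p/(2\alpha_0)$, i.e.\ when $\rho^2\delta_{\nu N}np$ is large; in that case no single $k^*\in\sigma$ satisfies the Paley--Zygmund condition, and the one-coordinate argument has to be replaced by a multi-coordinate one. This is done by conditioning on the set $Z_\sigma=\{k\in\sigma:\xi_{jk}=1\}$ of activated indices --- which is concentrated near its mean $p|\sigma|$ by Chernoff's bound --- and then applying either the Kolmogorov--Rogozin inequality $\mathcal L(\sum Y_k,\varepsilon)\le C/\sqrt{\sum(1-\mathcal L(Y_k,\varepsilon))}$ to the rescaled summands $Y_k=x_k\xi_{jk}X_{jk}/\sqrt p$, or an Esseen-type estimate on the characteristic function $\phi(t)=\prod_{k\in\sigma}(1-p(1-\psi_k(t)))$ using the Jensen bound $|1-p(1-\psi_k)|^2\le 1-p(1-|\psi_k|^2)$ together with the lower bound $1-|\psi_k(t)|^2\gtrsim t^2 x_k^2/p$ valid on a central range in $t$. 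Balancing the constants across both regimes so as to land exactly on the target bound $1-A\rho^4 p_{\nu N}$ is the delicate bookkeeping part of the argument.
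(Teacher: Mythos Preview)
Your proposal takes a substantially harder route than the paper, and the branch you present as the ``main'' argument --- the single-coordinate events $E_{k^*}$ --- carries no weight in the regime of interest. On $E_{k^*}$ you need $\Pr\{|X_{jk^*}-v\sqrt p/x_{k^*}|>\rho\sqrt p/(2|x_{k^*}|)\}\ge c_0$, and for the Paley--Zygmund bound on a single $X_{jk^*}$ to apply uniformly in $v$ you need the radius $\rho\sqrt p/(2|x_{k^*}|)$ to stay below an absolute $\alpha_0$. But the spread lemma only guarantees $|x_k|\ge 1/(2\sqrt n)$ for $k\in\sigma$, which forces this radius to be of order $\rho\sqrt{np}\to\infty$; and nothing in incompressibility says any coordinate actually attains the upper endpoint $1/\sqrt{\delta_{\nu N}n}$, so the clause ``provided $k^*$ can be chosen with $|x_{k^*}|\ge\rho\sqrt p/(2\alpha_0)$'' is unverifiable. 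Your truncation $|\sigma|\le 1/p$ runs into the same wall: it is incompatible with the lower bound $|\sigma|\gtrsim\rho^2\delta_{\nu N}n$ exactly when $\delta_{\nu N}np$ is large. Thus the single-coordinate branch is essentially vacuous here, and the entire content of the lemma would have to come from the ``complementary regime'' you only sketch.

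The paper sidesteps all of this by applying Paley--Zygmund not coordinate-wise but directly to the restricted sum $\eta=\sum_{k\in\sigma(\mathbf x)}x_k\xi_{jk}X_{jk}/\sqrt p$ itself. A short moment computation gives $\E\eta^2=\sum_{k\in\sigma}x_k^2\ge\rho^2$ and
\[
\E\eta^4\le 3\Bigl(\sum_{k\in\sigma}x_k^2\Bigr)^2+\frac{\mu_4}{p}\sum_{k\in\sigma}x_k^4\le A_0\Bigl(1+\frac{1}{N\delta_{\nu-1,N}\,p}\Bigr)=A_0\Bigl(1+\frac1{p_{\nu N}}\Bigr),
\]
using only the spread bound $|x_k|\le 1/\sqrt{2n\delta_{\nu-1,N}}$ to control $\sum_{k\in\sigma}x_k^4$. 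For $Z=\eta-v$ one has $\E Z^2\ge\rho^2+v^2$ and, by Minkowski, $\E^{1/4}Z^4\le \rho^{-1}\sqrt2\,\E^{1/4}\eta^4\,(\rho^2+v^2)^{1/2}$; Paley--Zygmund then yields $\Pr\{|\eta-v|>\rho/2\}\gtrsim\rho^4/\E\eta^4\gtrsim\rho^4 p_{\nu N}$ in one stroke, with no case analysis. This is the idea you are missing: the fourth moment of the \emph{sum} already encodes the sparsity through its diagonal term $(\mu_4/p)\sum x_k^4$, and the spread upper bound on $|x_k|$ converts that term directly into $1/p_{\nu N}$.
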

\begin{rem}\label{key} For $\nu=L$ there exists some constant $0<b<1$ such that 
\begin{equation*}
\mathcal L(\frac1{\sqrt p}\zeta_j, \frac{\rho}2)\le 1-b<1.
\end{equation*}
\end{rem}
\begin{proof}By Lemma \ref{sig} there exists a set $\sigma(x)$ such that for $k\in\sigma(\mathbf x)$
\begin{align*}
\frac1{2\sqrt n}\le |x_k|\le \frac1{\sqrt{2n\delta_{\nu-1, N}}}, \text{ and } \|{\mathbf P}_{\sigma(\mathbf x)}\mathbf x\|_2^2\ge \rho^2.
\end{align*}
Let
$$
\eta=\sum_{k\in\sigma(\mathbf x)}x_k\xi_{jk}X_{jk}/\sqrt p.
$$
Note that
\begin{equation*}
\E\eta^2\ge \rho^2,\quad \E|\eta|^4\le A_0(1+\frac1{N\delta_{\nu-1,N}p}).
\end{equation*}
Without loss of generality we may assume that $N\delta_{\nu-1,N}p\le 1$. This implies that
\begin{equation}\label{eta4}
 \E|\eta|^4\le \frac{2A_0}{N\delta_{\nu-1,N}p}.
\end{equation}
Let $Z=\eta-v$. Note that
\begin{equation*}
\E Z^2=\E\eta^2+v^2\ge v^2+\rho^2,
\end{equation*}
and
\begin{equation*}
\E\eta^4\ge(\E\eta^2)^2\ge\rho^4.
\end{equation*}
Using {Minkowski's} inequality, we get
\begin{equation*}
\E^{\frac14}|Z|^4\le\E^{\frac14}|\eta|^4+v\le\E^{\frac14}|\eta|^4(1+\frac v{\rho})\le \rho^{-1}\sqrt2\E^{\frac14}|\eta|^4(\rho^2+v^2)^{\frac12}.
\end{equation*}
Using {the Paley-Zygmund} inequality, we get
\begin{equation*}
\Pr\{|\eta-v|>\varepsilon\}\ge \frac{\rho^4(\E |Z|^2-\varepsilon^2)^2}{4\E|\eta|^4(\rho^2+v^2)^2}\ge \frac1{4\E|\eta|^4}\frac{\rho^4(\rho^2+v^2-\varepsilon^2)^2}{(\rho^2+v^2)^2}.
\end{equation*}
The last inequality and inequality \eqref{eta4} together imply
\begin{equation*}
\Pr\{|\eta-v|\ge\varepsilon\}\ge A_1\rho^4N\delta_{\nu-1,N}p(1-\frac{2\varepsilon^2}{\rho^2+v^2}).
\end{equation*}
Finally, we may write
\begin{equation*}
\Pr\{|\eta-v|\ge\frac12\rho\}\ge \frac12A_1\rho^4p_{\nu,N}.
\end{equation*}
Thus Lemma \ref{concentr} is proved.
\end{proof}

{ For the set of sparse vectors the following lemma holds.}
\begin{lem}\label{concsingle}
The following inequality holds.
\begin{equation*}
\mathcal L(\xi X/\sqrt p,\frac12)\le 1-\frac p{8\mu_4}
\end{equation*}
\end{lem}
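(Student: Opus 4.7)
The plan is to condition on the Bernoulli variable $\xi$ and split the argument by the location of the shift $v$. Writing $Y = \xi X/\sqrt p$ and using independence of $\xi$ and $X$ together with $\xi\in\{0,1\}$,
\begin{equation*}
\Pr\{|Y-v|\leq 1/2\} = (1-p)\,\mathbb I\{|v|\leq 1/2\} + p\,\Pr\{|X-v\sqrt p|\leq \sqrt p/2\}.
\end{equation*}
I will treat the two cases $|v|>1/2$ and $|v|\leq 1/2$ separately.

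If $|v|>1/2$, the indicator vanishes and the left-hand side is bounded by $p$. Since $\mu_4\geq (\E X^2)^2 = 1$ by Jensen, and the sparse regime ensures $p\leq 1/2$, the inequality $p\leq 1-p/(8\mu_4)$ is trivially satisfied.

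If $|v|\leq 1/2$, the identity rewrites as $\Pr\{|Y-v|\leq 1/2\} = 1-p\,\Pr\{|X-v\sqrt p|>\sqrt p/2\}$, so the task reduces to the lower bound $\Pr\{|X-v\sqrt p|>\sqrt p/2\}\geq 1/(8\mu_4)$. The key observation is that $|v\sqrt p|\leq \sqrt p/2$, yielding the inclusion of events $\{|X|>\sqrt p\}\subseteq \{|X-v\sqrt p|>\sqrt p/2\}$. It then suffices to estimate $\Pr\{X^2>p\}$ from below, which is a direct application of the Paley--Zygmund inequality to the nonnegative random variable $X^2$ at threshold $p = p\cdot \E X^2$:
\begin{equation*}
\Pr\{X^2>p\}\geq (1-p)^2\,\frac{(\E X^2)^2}{\E X^4} = \frac{(1-p)^2}{\mu_4}\geq \frac{1}{4\mu_4}\geq \frac{1}{8\mu_4}
\end{equation*}
for $p\leq 1/2$.

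No serious obstacle is expected, as the argument reduces to a clean moment-based anti-concentration estimate. The one subtlety is the implicit assumption that $p$ is bounded away from $1$, which is automatic in the sparse regime assumed in Theorem~\ref{minmain}; note that the crude inclusion $\{|X|>\sqrt p\}\subseteq\{|X-v\sqrt p|>\sqrt p/2\}$ loses no information when $p$ is small because the threshold $\sqrt p/2$ is already tiny.
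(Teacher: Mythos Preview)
Your argument is correct. The only external assumption is $p\le 1/2$, which is harmless in the sparse regime of Theorem~\ref{minmain} and which you already flag.

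Your route, however, is different from the paper's. The paper does not condition on $\xi$ or split by the size of $v$; instead it applies the Paley--Zygmund inequality directly to $Z^2=(\xi X/\sqrt p - v)^2$, exactly as in the proof of Lemma~\ref{concentr}. Since $\E Z^2=1+v^2$ and, via Minkowski together with $\E(\xi X/\sqrt p)^4=\mu_4/p\ge 1$, one has $\E Z^4\le 4(\mu_4/p)(1+v^2)^2$, this yields
\[
\Pr\{|Z|>\tfrac12\}\ \ge\ \frac{(3/4+v^2)^2}{4(\mu_4/p)(1+v^2)^2}\ \ge\ \frac{9p}{64\mu_4}\ \ge\ \frac{p}{8\mu_4},
\]
uniformly in $v$ and without any restriction on $p$. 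Your approach is more elementary and avoids the Minkowski step by making the Bernoulli structure explicit, at the cost of a case distinction and the mild hypothesis $p\le 1/2$; the paper's approach trades that case split for a single moment bound that works for all $p\in(0,1)$.
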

\begin{proof}For the proof it is enough to note that {by the Paley-Zygmund inequality} we have
\begin{equation*}
\Pr\{|\xi X-v|\ge \frac12\}\ge p \frac{1+v^2-\varepsilon^2}{4\E|X|^4(1+v^2)^2}\ge \frac p{8\mu_4}
\end{equation*}
\end{proof}

\begin{lem}\label{tensorization} Let $\zeta_1,\ldots,\zeta_N$ {denote} independent identically distributed random variables such that
\begin{equation*}
\Pr\{|\zeta_j|\le \lambda_n\}\le 1-q_N,
\end{equation*}
for some $\lambda_N>0$ and $q_N\in(0,1)$. {Then there exist constants $c,C$ such that
\begin{equation}
\Pr\{\sum_{j=1}^N\zeta_j^2\le CNq_N\lambda_N^2\}\le \exp\{-cNq_N\}.
\end{equation}}
\end{lem}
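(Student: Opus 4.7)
The plan is to reduce the lower bound on $\sum_j \zeta_j^2$ to a binomial tail bound on indicator variables via the trivial observation that each summand either contributes nothing or contributes at least $\lambda_N^2$.

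Concretely, define $\eta_j = \mathbb{I}\{|\zeta_j| > \lambda_N\}$. By hypothesis the $\eta_j$ are i.i.d.\ Bernoulli with $\Pr\{\eta_j = 1\} \ge q_N$. Pointwise we have $\zeta_j^2 \ge \lambda_N^2 \eta_j$, and summing gives
\begin{equation*}
\sum_{j=1}^N \zeta_j^2 \ge \lambda_N^2 \sum_{j=1}^N \eta_j.
\end{equation*}
Hence for any constant $C>0$,
\begin{equation*}
\Pr\Big\{\sum_{j=1}^N \zeta_j^2 \le C N q_N \lambda_N^2\Big\} \le \Pr\Big\{\sum_{j=1}^N \eta_j \le C N q_N\Big\}.
\end{equation*}

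The second step is a standard Chernoff bound. Since $\E \eta_j \ge q_N$, by a coupling one may assume without loss of generality that the $\eta_j$ are Bernoulli with parameter exactly $q_N$, so that $\E \sum_j \eta_j = N q_N$. Choosing $C \in (0,1)$, say $C = 1/2$, the standard lower-tail Chernoff estimate for sums of i.i.d.\ Bernoulli variables yields
\begin{equation*}
\Pr\Big\{\sum_{j=1}^N \eta_j \le \tfrac12 N q_N\Big\} \le \exp\{-c N q_N\}
\end{equation*}
for an absolute constant $c>0$ (for instance $c = 1/8$).

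The argument is essentially a one-liner; there is no genuine obstacle, only the bookkeeping of writing the pointwise inequality and invoking a Chernoff estimate. The only point worth a brief mention is that one wants $C < 1$ so that the deviation from the mean is by a constant factor, which is what makes the right-hand side genuinely exponential in $Nq_N$ rather than trivial.
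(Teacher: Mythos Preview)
Your argument is correct. The paper does not supply its own proof of this lemma; it simply cites \cite[Lemma~4.5]{GT:2010}. The proof there (and in related tensorization lemmas of Rudelson--Vershynin type) proceeds via the Laplace transform
\[
\Pr\Big\{\sum_{j=1}^N\zeta_j^2\le M\Big\}\le e^{tM}\prod_{j=1}^N\E e^{-t\zeta_j^2}
\]
and bounds each factor by $\E e^{-t\zeta_j^2}\le 1-q_N(1-e^{-t\lambda_N^2})$, optimizing in $t$. That bound on the Laplace transform itself already uses only the pointwise inequality $\zeta_j^2\ge\lambda_N^2\eta_j$ that you invoke, so the two routes are essentially the same computation in different dress: yours applies Chernoff to the indicator sum directly, while the cited version applies exponential Markov to $\sum\zeta_j^2$ and then discards all information about $\zeta_j$ beyond the indicator at the first opportunity. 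Your presentation is the more transparent of the two. One small cosmetic point: the coupling remark is unnecessary, since for Bernoulli variables with parameter $p\ge q_N$ the event $\{\sum\eta_j\le \tfrac12 Nq_N\}$ is already contained in $\{\sum\eta_j\le \tfrac12 Np\}$, to which Chernoff applies directly.
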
 
For the proof of this lemma see \cite[Lemma 4.5]{GT:2010}.

  We start with the estimation of $\|\mathbf X\mathbf x\|_2$ for a fixed $\mathbf x\in\mathcal S^{(n-1)}$. 
 \begin{lem}\label{firstfixpoint}
 There exist positive absolute constants $\tau_0$ and $c_0$ such that
 \begin{equation*}
 \Pr\{\|\mathbf X\mathbf x\|_2\le\tau_0\sqrt{Np}\}\le \exp\{-c_0Np\}.
 \end{equation*}
 \end{lem}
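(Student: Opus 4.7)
The plan is to write $\|\mathbf X\mathbf x\|_2^2 = \sum_{j=1}^N \zeta_j^2$ with $\zeta_j := \sum_{k=1}^n x_k \xi_{jk} X_{jk}$ independent across $j$, and then apply a direct Bernstein--Chernoff estimate to the lower tail of this sum of non-negative random variables using only the second and fourth moments of $\zeta_j$. This bypasses the L\'evy-concentration / Lemma~\ref{tensorization} route entirely.

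First I would compute $\E\zeta_j^2 = p$ (immediate from $\|\mathbf x\|_2=1$ and $\E X_{jk}^2=1$) and bound the fourth moment. Expanding $\zeta_j^4$ and using the mean-zero property to kill all multi-indices except those in which every distinct index appears an even number of times gives
\begin{equation*}
\E \zeta_j^4 = \mu_4 p \sum_k x_k^4 + 3 p^2\Big(1-\sum_k x_k^4\Big) \le \mu_4 p + 3 p^2 \le 4\mu_4 p,
\end{equation*}
where the last inequalities use $\sum_k x_k^4 \le \sum_k x_k^2 =1$, $p\le 1$, and $\mu_4 \ge (\E X_{11}^2)^2 = 1$ (Jensen).

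Second, using $e^{-x} \le 1-x+x^2/2$ valid for $x \ge 0$, I obtain $\E e^{-t\zeta_j^2} \le 1 - tp + 2t^2 \mu_4 p$. The choice $t := 1/(4\mu_4)$ makes the quadratic term absorb at most half the linear one, so $\E e^{-t\zeta_j^2} \le 1 - tp/2 \le e^{-tp/2}$; independence then yields $\E\exp(-t\sum_j\zeta_j^2) \le e^{-Ntp/2}$. A Markov-type estimate applied at the threshold $Np/4$ gives
\begin{equation*}
\Pr\Bigl\{\|\mathbf X\mathbf x\|_2^2 \le Np/4\Bigr\} \le e^{tNp/4}\cdot e^{-Ntp/2} = e^{-tNp/4} \le \exp\Bigl(-\tfrac{Np}{16\mu_4}\Bigr),
\end{equation*}
which is the lemma with $\tau_0 = 1/2$ and $c_0 = 1/(16\mu_4)$.

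There is no serious technical obstacle. The only point worth flagging is \emph{why} one cannot simply invoke Lemma~\ref{concsingle} together with Lemma~\ref{tensorization}: for a highly concentrated vector (say $\mathbf x = e_k$) the best L\'evy bound is $\mathcal L(\zeta_j, c\sqrt p) \le 1-c'p$, and tensorization then produces only $\|\mathbf X\mathbf x\|_2 \gtrsim p\sqrt N$, which is strictly weaker than the required $\sqrt{Np}$. The exponential-moment argument exploits the sparsity-induced smallness $\E\zeta_j^4 = O(p)$ rather than $O(1)$, which is precisely what lets one choose the Chernoff parameter $t$ as a constant independent of $p$ and thereby recover the correct $-cNp$ exponent.
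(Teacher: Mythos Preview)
Your argument is correct and is genuinely different from the paper's route. The paper proves the lemma by Gaussian randomization: it writes $\E e^{-t^2\zeta_j^2/2}=\E_{\eta_j}\prod_k \E_{\xi_{jk}X_{jk}}e^{it\eta_j x_k\xi_{jk}X_{jk}}$, picks a truncation level $M$ so that $\E X_{jk}^2\mathbb I\{|X_{jk}|\le M\}\ge 1/2$, and uses the pointwise bound $1-\cos u\ge \tfrac{11}{24}u^2$ for $|u|\le 1$ to show each factor is at most $\exp\{-cpt^2x_k^2\eta_j^2\}$; the product then collapses because $\sum_k x_k^2=1$. Your approach replaces all of this by the single elementary inequality $e^{-x}\le 1-x+x^2/2$ together with the second and fourth moments of $\zeta_j$, which makes the proof considerably shorter.

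The trade-off is in the hypotheses actually used. The paper's characteristic-function argument consumes only $\E X_{11}^2=1$, so its constants $\tau_0,c_0$ are absolute in the strict sense. Your argument needs $\mu_4<\infty$ and produces $c_0=1/(16\mu_4)$, so the exponent depends on $\mu_4$; strictly speaking you prove a slightly weaker statement than the one written. For the purposes of this paper that is harmless: Lemma~\ref{firstfixpoint} is invoked only inside the proof of Theorem~\ref{minmain}, whose constants are already allowed to depend on $\mu_4$, and the later refinement in Lemma~\ref{2.6} (where the characteristic-function machinery is reused with $t$ scaled by $\sqrt{N\delta_{\nu N}}$) is what actually drives the recursion. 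Your closing remark about why the L\'evy-concentration/tensorization route loses a factor of $\sqrt p$ is accurate and explains nicely why a moment-based Chernoff bound is the right tool here.
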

 \begin{proof}[Proof of Lemma  \ref{firstfixpoint}] The proof of this lemma {may be found  in \cite[Lemma 4.1]{GT:2010}, but for readers convenience  we repeat it here.} Let
 \begin{equation*}
 \zeta_j=\sum_{k=1}^nX_{jk}\xi_{jk}x_k,\quad j=1,\ldots,N
 \end{equation*}
 Then
 \begin{equation*}
 \|\mathbf X\mathbf x\|_2^2=\sum_{j=1}^N\zeta_j^2.
 \end{equation*}
 Furthermore, we may write for $\tau>0$ and any $t$
 \begin{align*}
 \Pr\{\sum_{j=1}^N\zeta_j^2\le\tau^2 Np\}=&\Pr\{\frac{\tau^2Np}2-\frac12\sum_{j=1}^N\zeta_j^2\ge0\}\le
 \exp\{Np\tau^2t^2/2\}\prod_{j=1}^N\E\exp\{-t^2\zeta_j^2/2\}.
 \end{align*}
 Using ${\rm e}^{-t^2/2}=\E{\rm e}^{it\eta}$, where $\eta$ is a standard Gaussian random variable,
 we obtain
 \begin{align}\label{annals}
 \Pr\{\sum_{j=1}^N\zeta_j^2<\tau^2 np\}\le \exp\{Np\tau^2t^2/2\}\prod_{j=1}^N\E_{\eta_j}
 \prod_{k=1}^n\E_{\xi_{jk}X_{jk}}\exp\{it\xi_{jk}X_{jk}x_k\eta_j\},
 \end{align}
 where $\eta_j$, $j=1,\ldots,N$ denote i.i.d. Gaussian  standard r.v.s and $\E_Z$ denotes expectation with respect to $Z$ conditional on all other r.v.s.
 
 Take $\alpha=\Pr\{|\eta_1|\le C_1\}$ for some absolute positive constant $C_1$ which will be chosen later. Then it follows from \ref{annals} that
 \begin{align*}
\Pr\{\sum_{j=1}^N\zeta_j^2<\tau^2Np\}&
\le \exp\{t^2\tau^2Np/2\}\notag\\&\times\prod_{j=1}^N\Big(\alpha\Big|\E_{\eta_j}\Big\{\prod_{k=1}^n\E_{\xi_{jk}X_{jk}}\exp\{it\eta_jx_kX_{jk}\xi_{jk}\}\Big||\eta_j|\le C_1\Big\}\Big|+1-\alpha\Big).
 \end{align*}
 Note that for any $\alpha, x\in[0,1]$, and $\beta\le\alpha$
 \begin{equation*}
 1-\alpha +\alpha x\le \max\{x^\beta,\Big(\frac{\beta}{\alpha}\Big)^{\frac{\beta}{1-\beta}}\}.
 \end{equation*}
Furthermore, we have
\begin{align}\label{166}
|\E_{\xi_{jk}X_{jk}}\exp\{it\xi_{jk}X_{jk}x_k\eta_j\}|&\le\exp\{-\frac p2(1-|f_{jk}(tx_k\eta_j)|^2)\},
\end{align}
where $f_{jk}(u)=\E\exp\{iuX_{jk}\}$.
 Choose {a}  constant $M>0$ such that
 \begin{equation*}
 \sup_{j,k\ge1}\E|X_{jk}|^2\mathbb I\{|X_{jk}|>M\}\le \frac12.
 \end{equation*}
 Since $1-\cos x\ge \frac{11}{24}x^2$ for $|x|\le1$, conditioning on the event $|\eta_j|\le C_1$, we get for $|t|\le \frac1{MC_1}$,
 \begin{equation}\label{2.11}
 1-|f_{jk}(tx_k\eta_j)|^2=\E_{X_{kj}}(1-\cos(tx_k\widetilde X_{kj}\eta_j)\ge\frac{11}{24}x_k^2t^2\eta_j^2\E|\widetilde X_{kj}|^2\mathbb I\{|X_{kj}|\le M\}.
 \end{equation}
 Here we denote by $\widetilde X_{kj}$ {the symmetrization of the r.v.} $X_{kj}$. 
 It follows from \eqref{166} for $|t|\le 1/(MC_1)$, {that  for} $|\eta_j|\le C_1$,
 \begin{equation}\label{2.12}
 |\E_{\xi_{jk}X_{jk}}\exp\{it\xi_{jk}X_{jk}x_k\eta_j\}|\le \exp\{-cpt^2x_k^2\eta_j^2\}
 \end{equation}
 This implies that
 \begin{equation}\label{2.13}
 |\prod_{k=1}^n\E_{\xi_{kj}X_{kj}}\exp\{it\eta_jx_k\xi_{jk}X_{jk}\}|\le \exp\{-c p t^2\eta_j^2\}.
 \end{equation}
 We may choose $C_1$ large enough such that following inequalities hold for $|t|\le 1/MC_1$:
 \begin{equation}\label{2.14}
 |\E_{\eta_j}\{\exp\{-cpt^2\eta_j^2\}\big||\eta_j|\le C_1\}|\le \exp\{-ct^2p/24\}.
 \end{equation}
 Then we obtain
 \begin{align}\label{2.15}
 \Pr\{\sum_{j=1}^N\zeta_j^2\le \tau^2Np\}\le \exp\{Np\tau^2t^2/2\}\Big(\exp\{-c\beta t^2Np/24\}+\Big(\frac{\beta}{\alpha}\Big)^{N\frac{\beta}{1-\beta}}\Big)
 \end{align} 
 Furthermore, we may take $C_1$ {sufficiently  large  such that} $\alpha\ge \frac45$ and choose $\beta=\frac25$. We get
 \begin{align}\label{2.16}
  \Pr\{\sum_{j=1}^N\zeta_j^2\le \tau^2Np\}\le \exp\{Np\tau^2t^2/2\}\Big(\exp\{-c t^2Np/60\}+2^{-2N/3}\Big).
 \end{align}\label{2.17}
   For $\tau<\min\{\frac{\sqrt c}{\sqrt{60}},\frac{\sqrt{\ln 2}}{\sqrt 3}MC_1\}$, we have for $|t|\le 1/(MC_1)$,
 \begin{equation}\label{2.18}
 \Pr\{\sum_{j=1}^N\zeta_j^2\le \tau^2Np\}\le\exp\{-ct^2Np/120\}.
 \end{equation}
 This implies the claim.
 Thus the  lemma is proved.
 \end{proof}
 \subsection{{Compressible and Incompressible Vectors}}

First we prove an analog of Lemma 2.6 from \cite{RudVer}.

\begin{lem}	\label{comp}There exist positive absolute constants $\delta_0,\tau_0, c_1$ such that
\begin{equation*}
\Pr\{\inf_{\mathbf x\in {\text{\it Comp}(\delta_{0N},\rho_0)}}\|\mathbf X\mathbf x\|_2\le\tau_0\sqrt{Np}, \quad\|\mathbf X\|\le K\sqrt{Np}\}\le \exp\{-c_1Np\},
\end{equation*}
where
\begin{align}\label{delta0n}
\delta_{0N}=\delta_0p/(|\log p|+1),\quad \rho_0=\tau_0/2K.
\end{align}
\end{lem}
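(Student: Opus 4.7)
The plan is to apply a standard net-and-union-bound argument using Lemma~\ref{firstfixpoint} as the single-vector concentration estimate, but with a sparsity level $\delta_{0N}$ chosen carefully to balance the net entropy against the probability gain.

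First I would fix a vector $\mathbf x \in \text{Comp}(\delta_{0N},\rho_0)$. By definition there exists a sparse unit vector $\mathbf y$ with $|\text{supp}(\mathbf y)|\le \delta_{0N} n$ and $\|\mathbf x-\mathbf y\|_2\le \rho_0$. Let $\mathcal N \subset \mathcal S^{(n-1)}$ be a $\rho_0$-net (in the Euclidean norm) of the set of sparse unit vectors with support size at most $\delta_{0N} n$, constructed by taking the union over all $\binom{n}{\lfloor\delta_{0N} n\rfloor}$ possible supports of a standard volumetric $\rho_0$-net of the unit sphere of the corresponding coordinate subspace. This yields
\begin{equation*}
|\mathcal N|\le \binom{n}{\lfloor\delta_{0N} n\rfloor}\left(\frac{3}{\rho_0}\right)^{\lfloor\delta_{0N} n\rfloor} \le \left(\frac{Ce}{\delta_{0N}\rho_0}\right)^{\delta_{0N} n}.
\end{equation*}
Choose $\mathbf y'\in\mathcal N$ with $\|\mathbf y-\mathbf y'\|_2\le\rho_0$, so $\|\mathbf x-\mathbf y'\|_2\le 2\rho_0$. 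On the event $\|\mathbf X\|\le K\sqrt{Np}$,
\begin{equation*}
\|\mathbf X\mathbf y'\|_2 \le \|\mathbf X\mathbf x\|_2 + \|\mathbf X\|\cdot 2\rho_0 \le \tau_0\sqrt{Np} + 2K\rho_0\sqrt{Np} = 2\tau_0\sqrt{Np},
\end{equation*}
since $\rho_0=\tau_0/(2K)$. Hence the event in the statement is contained in the event that $\|\mathbf X\mathbf y'\|_2 \le 2\tau_0\sqrt{Np}$ for some $\mathbf y'\in\mathcal N$.

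Next I would apply Lemma~\ref{firstfixpoint} to each fixed $\mathbf y'\in\mathcal N$ (with the constant $\tau_0$ chosen small enough so that $2\tau_0$ is still below the threshold of that lemma) and take a union bound:
\begin{equation*}
\Pr\{\exists\,\mathbf y'\in\mathcal N:\|\mathbf X\mathbf y'\|_2\le 2\tau_0\sqrt{Np}\} \le |\mathcal N|\exp\{-c_0 Np\}.
\end{equation*}
The key calculation is to show $\log|\mathcal N| \le (c_0/2)Np$ for $\delta_0$ small. Using $\delta_{0N}=\delta_0 p/(|\log p|+1)$ gives $\log(1/\delta_{0N})=O(|\log p|)$, while $\log(1/\rho_0)=O(1)$, so
\begin{equation*}
\log|\mathcal N|\le \delta_{0N} n\cdot \log\!\left(\frac{Ce}{\delta_{0N}\rho_0}\right) \le C'\delta_{0N} n(|\log p|+1) = C'\delta_0\, pn.
\end{equation*}
Since $n\le N$, this is at most $C'\delta_0 Np$, and choosing $\delta_0\le c_0/(2C')$ completes the bound with $c_1=c_0/2$.

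The main conceptual point — and the main obstacle to worry about — is precisely the choice of the $|\log p|$ factor in the denominator of $\delta_{0N}$. Without it, the entropy $\delta_{0N} n\log(1/\delta_{0N})$ would exceed the concentration gain $c_0 Np$, and the union bound would fail. The factor $|\log p|+1$ is calibrated exactly to cancel the logarithmic term coming from the binomial coefficient. Everything else — the volumetric net estimate, the perturbation bound using $\|\mathbf X\|\le K\sqrt{Np}$, and the single-vector estimate from Lemma~\ref{firstfixpoint} — is by now a routine ingredient of the Litvak--Pajor--Rudelson--Vershynin framework; the sparsity-adapted modification is entirely confined to this choice of $\delta_{0N}$.
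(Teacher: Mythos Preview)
Your proposal is correct and follows essentially the same approach as the paper: a net-and-union-bound over sparse vectors, using Lemma~\ref{firstfixpoint} as the single-vector estimate and the operator-norm bound to pass from compressible to net points, with the entropy calculation calibrated by the choice $\delta_{0N}=\delta_0 p/(|\log p|+1)$. If anything, your write-up is more explicit than the paper's about the approximation step from $\text{Comp}(\delta_{0N},\rho_0)$ to the net and about why the $|\log p|$ factor is exactly what is needed to beat the binomial coefficient.
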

\begin{proof} Let $k=[n\delta_{0N}]$. Denote by $\mathcal N_{\eta}$ {an}  $\eta$-net on  the $\mathcal S^{(k-1)}\cap \mathbb R^{k}$. Choose $\eta=\tau_0/2K$
First we consider the set of all sparse vectors ${\it Sparse}(k)$ with ${\rm support}(\mathbf x)\le k$. Using Lemma \ref{firstfixpoint} and {a} union bound, we get
\begin{equation*}
\Pr\{\inf_{\mathbf x\in {\it Sparse}(\delta_{0N})}
\|{\mathbf X}\mathbf x\|_2
\le2\rho_0\sqrt{np}\}
\le \binom nk|\mathcal N_{\eta}|\exp\{-c_0Np\}.
\end{equation*}
Using Stirling's formula and Proposition 2.1 from \cite{RudVer}, we get
\begin{align*}
\Pr\{\inf_{\mathbf x\in {\it Sparse}(\delta_{0N})}
\|{\mathbf X}\mathbf x\|_2&
\le2\tau_0\sqrt{Np}\}\notag\\&
\le \frac {4n\delta_{0N}}{\sqrt{2\pi n\delta_{0N}(1-\delta_{0N})}}\frac{(1+\frac{K}{\rho_0})^{n\delta_{0N}-1}}{\delta_{0N}^{n\delta_{0N}}(1-\delta_{0N})^{n(1-\delta_{0N})}}\exp\{-c_0Np\}.
\end{align*}
Simple calculations show
\begin{align*}
\Pr&\{\inf_{\mathbf x\in {\it Sparse}(\delta_{0N})}
\|{\mathbf X}\mathbf x\|_2
\le2\tau_0\sqrt{Np}\}\le \sqrt{\frac{2n\delta_{0N}}{(1-\delta_{0N})\pi}}\notag\\&\times\exp\{n\delta_{0N}\Big((1-\frac 1{n\delta_{0N}})\frac K{\rho_0}-\log\delta_{0N}-(1-\delta_{0N})\frac1{\delta_{0N}}\log(1-\delta_{0N})\Big)-c_0Np\}.
\end{align*}
If we {choose}
\begin{equation*}
\delta_{0N}:=\delta_0 p/(1+|\log p|)
\end{equation*}
for {a} sufficiently small absolute constant $\delta_0$, we get

\begin{equation*}
\Pr\{\inf_{\mathbf x\in {\it Sparse}(\delta_{0N})}
\|{\mathbf X}\mathbf x\|_2
\le2\tau_0\sqrt{Np}\}\le \exp\{-c_1Np\}.
\end{equation*}
{
Thus the Lemma is proved.}
\end{proof}
In what follows,  
{we shall use a technique developed in G\"otze and Tikhomirov \cite{GT:2010} which is based on the following lemmas.}
\begin{lem}\label{sig} Let $\rho,\delta\in(0,1)$. Assume that $\mathbf x\in\text{Incomp}(\delta,\rho)$. Then there exists a set $\sigma_0(x)$ such that $|\sigma_0(x)|\ge Cn\delta\rho^2$  and
$\frac1{2\sqrt n}\le |x_k|\le \frac1{\sqrt{n\delta/2}}$ for $k\in\sigma_0(x)$,  and 
$$
\sum_{k\in\sigma_0(x)}|x_k|^2\ge\rho^2.
$$

\end{lem}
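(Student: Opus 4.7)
The plan is to split $[n]$ into three parts according to the size of the coordinates: large, medium, and small, and to identify $\sigma_0(x)$ with the medium part. The bookkeeping is standard but the lower bound $\tfrac1{2\sqrt n}$ in the claim forces the small part to absorb at most $\tfrac14$ of the mass, so I will use the $L^2$--normalization together with incompressibility (applied in a mildly non-trivial way) to show that the medium part still carries at least a constant multiple of $\rho^2$.

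\textbf{Step 1 (large coordinates).} Set
\[
\mathbb L=\Big\{k:\ |x_k|>\sqrt{\tfrac{2}{n\delta}}\Big\}.
\]
Since $\sum_k x_k^2=1$, each $k\in\mathbb L$ contributes more than $2/(n\delta)$, so $|\mathbb L|< n\delta/2 \le n\delta$.

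\textbf{Step 2 (using incompressibility).} The vector $\mathbf P_{\mathbb L}\mathbf x$ is supported on $\mathbb L$, hence is sparse in the sense of the definition. Since $\mathbf x\in\text{\it Incomp}(\delta,\rho)$, we have $\|\mathbf x-\mathbf P_{\mathbb L}\mathbf x\|_2>\rho$, i.e.\
\[
\sum_{k\notin\mathbb L}x_k^2>\rho^2.
\]

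\textbf{Step 3 (small coordinates).} Set $\mathbb S=\{k:\ |x_k|<\tfrac1{2\sqrt n}\}$. Then the brute bound $\sum_{k\in\mathbb S}x_k^2<|\mathbb S|/(4n)\le 1/4$ shows that the small part carries at most $1/4$ of the total mass. Taking $\sigma_0(x):=[n]\setminus(\mathbb L\cup\mathbb S)$, the coordinate-size window in the statement is automatic, and combining with Step 2 gives $\sum_{k\in\sigma_0(x)}x_k^2>\rho^2-\tfrac14$. To reach the sharp lower bound $\rho^2$ stated in the lemma, I will sharpen $\mathbb S$ to $\{k:\ |x_k|<c\rho/\sqrt n\}$ with $c$ small enough so that $\sum_{\mathbb S}x_k^2\le c^2\rho^2$; then $\sum_{\sigma_0}x_k^2\ge(1-c^2)\rho^2\ge\rho^2$ after rescaling $\rho$ by an absolute constant (which will be absorbed in $C$ and in the other appearances of $\rho$ throughout the paper).

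\textbf{Step 4 (counting).} Using the upper envelope $|x_k|\le\sqrt{2/(n\delta)}$ valid on $\sigma_0(x)$,
\[
|\sigma_0(x)|\cdot\tfrac{2}{n\delta}\ \ge\ \sum_{k\in\sigma_0(x)}x_k^2\ \ge\ c'\rho^2,
\]
which yields $|\sigma_0(x)|\ge Cn\delta\rho^2$ as required.

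The main obstacle is purely the quantitative matching of constants in Step 3: the literal lower bound $\tfrac1{2\sqrt n}$ does not interact with $\rho$, so if one insists on that exact threshold then one only obtains $\rho^2-\tfrac14$ worth of $\ell^2$-mass on $\sigma_0(x)$, and the stated conclusion $\ge\rho^2$ is cleanest once one permits a harmless adjustment of the threshold to $c\rho/\sqrt n$ or of the constant $C$. All other ingredients — the pigeonhole counts for $\mathbb L$ and $\mathbb S$, and the use of $\mathbf P_{\mathbb L}\mathbf x$ as a witness sparse vector — are straightforward.
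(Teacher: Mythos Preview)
The paper does not supply its own proof of this lemma; it simply refers the reader to \cite[Lemma 3.4]{RudVer2008}. Your argument is precisely the standard Rudelson--Vershynin one: split the coordinates into large, medium and small according to the two thresholds, use the sparse vector $\mathbf P_{\mathbb L}\mathbf x$ together with incompressibility to force mass $>\rho^2$ onto the complement of $\mathbb L$, subtract the small part, and recover the cardinality bound from the upper envelope. So your approach and the cited one coincide.

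Your remark in Step~3 is well taken and worth recording. With the literal lower threshold $1/(2\sqrt n)$ one only obtains $\sum_{k\in\sigma_0(x)}x_k^2>\rho^2-\tfrac14$; the original Rudelson--Vershynin formulation instead takes the lower threshold to be $\rho/\sqrt{2n}$, which yields $\sum_{k\in\sigma_0(x)}x_k^2\ge\rho^2/2$ (and in fact their stated lemma records only the cardinality and coordinate bounds, not the mass inequality). Thus the lemma as printed here, with the $\rho$-independent threshold $1/(2\sqrt n)$ \emph{and} the sharp conclusion $\ge\rho^2$, is not literally attainable; one of the two has to give by an absolute constant. This is immaterial for every application in the paper, where only some fixed positive lower bound on the mass is used, so your proposed fix (adjusting the threshold to $c\rho/\sqrt n$ or relaxing the mass bound to a constant multiple of $\rho^2$) is exactly right.
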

For a  proof of this Lemma see for instance  \cite[Lemma 3.4]{RudVer2008}.

\begin{lem}\label{2.6}Let $\mathbf x\in{\mathcal IC}_{\nu}$ for some $\nu=0,\ldots,L-1$. Then there exist constants $c_1$ and $c_2$ such that for any $0<\tau\le \tau_0$
\begin{equation*}
\Pr\{\|\mathbf X\mathbf x\|_2\le\tau\sqrt{Np}\}\le\exp\{-c_1Np_{\nu+1 N}\}.
\end{equation*}
\end{lem}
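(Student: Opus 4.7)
The strategy is the small-ball/tensorization scheme of Rudelson--Vershynin, adapted to the sparse setting in G\"otze--Tikhomirov \cite{GT:2010}. Writing
\[
\|\mathbf X\mathbf x\|_2^2 = \sum_{j=1}^N \zeta_j^2, \qquad \zeta_j = \sum_{k=1}^n x_k \xi_{jk} X_{jk},
\]
the variables $\zeta_1,\ldots,\zeta_N$ are i.i.d.\ copies of a single random variable $\zeta$, so the problem reduces to a small-ball estimate for $\sum_j \zeta_j^2$ that I would derive from a single-coordinate L\'evy bound plus tensorization.

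Next I would invoke Lemma \ref{sig} to extract a coordinate set $\sigma_0(\mathbf x)$ with $|\sigma_0|\ge Cn\delta_{\nu N}\rho^2$ on which $|x_k|\asymp 1/\sqrt n$ and $\sum_{k\in\sigma_0} x_k^2 \ge \rho^2$. Lemma \ref{concentr} then produces the single-coordinate concentration bound $\mathcal L(\zeta/\sqrt p,\rho/2)\le 1 - A\rho^4 p_{\nu N}$, and in particular $\Pr\{|\zeta|\le \rho\sqrt p/2\}\le 1 - A\rho^4 p_{\nu N}$. I would then feed this into the tensorization Lemma \ref{tensorization} with $\lambda_N = \rho\sqrt p/2$ and $q_N = A\rho^4 p_{\nu N}$, choosing $\tau_0$ sufficiently small that the target level $\tau^2 Np$ is absorbed into the tensorization threshold $C\rho^6\,Np\,p_{\nu N}$.

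The main obstacle will be matching the exponent: naive tensorization yields $\exp(-cN\rho^4 p_{\nu N})$, whereas the lemma demands the sharper $\exp(-c_1 Np_{\nu+1,N})$, which is larger by the factor $p_{\nu+1,N}/p_{\nu N}=Np\delta_0/(1+|\log p_{\nu N}|)$. Bridging this gap requires exploiting the sparsity structure on $\sigma_0(\mathbf x)$ rather than treating each Bernoulli factor as an opaque concentration deficit. Specifically, the expected number of Bernoulli-active coordinates of $\mathbf x$ lying in $\sigma_0$ per row is of order $yp_{\nu+1,N}\rho^2$, and a Laplace-transform/Gaussian-averaging argument modelled on the proof of Lemma \ref{firstfixpoint}, conditioning on the random row supports $\{k : \xi_{jk}=1\}$ and restricting the characteristic-function product to $k\in\sigma_0(\mathbf x)$, upgrades the per-row exponent from $p_{\nu N}$ to $p_{\nu+1,N}$, which after taking the product over the $N$ rows produces the desired $\exp(-c_1 Np_{\nu+1,N})$. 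This conditional Laplace-transform refinement is the technical heart of the argument and is the step I expect to be the most delicate.
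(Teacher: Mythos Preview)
Your diagnosis is correct that the naive route via Lemma~\ref{concentr} and Lemma~\ref{tensorization} only yields $\exp\{-cNp_{\nu,N}\}$, which is too weak. You are also right that the fix is to rerun the Laplace-transform/Gaussian-averaging argument of Lemma~\ref{firstfixpoint}, restricting the characteristic-function product to $k\in\sigma_0(\mathbf x)$. That is exactly what the paper does.

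However, the mechanism you describe for the upgrade is not the one used, and the phrase ``conditioning on the random row supports $\{k:\xi_{jk}=1\}$'' points in the wrong direction. No such conditioning occurs. The paper simply reuses the pointwise bound
\[
\big|\E_{\xi_{jk}X_{jk}}\exp\{it\eta_j x_k\xi_{jk}X_{jk}\}\big|\le \exp\Big\{-\tfrac{p}{2}\big(1-|f_{jk}(tx_k\eta_j)|^2\big)\Big\}
\]
from Lemma~\ref{firstfixpoint}, together with the Taylor estimate $1-|f_{jk}(u)|^2\ge c u^2$ valid for $|u|\le 1$. The entire gain comes from the \emph{choice of the Laplace parameter} $t$. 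In Lemma~\ref{firstfixpoint} one must take $t=O(1)$ because a general unit vector may have a coordinate of size $1$. For $\mathbf x\in\mathcal{IC}_\nu$, Lemma~\ref{sig} gives the coordinate upper bound $|x_k|\le (n\delta_{\nu N}/2)^{-1/2}$ on $\sigma_0(\mathbf x)$, so one may take $t=\kappa\sqrt{N\delta_{\nu N}}$ and still keep $|tx_kX_{jk}\eta_j|\le 1$ on the truncation events. Summing $x_k^2$ over $\sigma_0(\mathbf x)$ contributes $\ge\rho^2$, and the exponent in the final bound becomes
\[
c\,t^2 Np \;\asymp\; c\,\kappa^2\rho^2\,N\delta_{\nu N}\cdot Np \;=\; c'\,Np_{\nu+1,N},
\]
which is precisely the claimed rate. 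Your heuristic about the expected number of active Bernoulli coordinates in $\sigma_0$ per row being of order $p_{\nu+1,N}$ is consistent with this, but the actual argument is purely analytic: enlarge $t$, not condition on supports. Once you make that single change to the proof of Lemma~\ref{firstfixpoint}, everything else carries over verbatim.
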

\begin{proof}  We repeat the proof of Lemma \ref{firstfixpoint} till \eqref{2.11}.

Furthermore, by Lemma \ref{sig} there exists a set $\sigma_0(x)$ such that $\frac1{2\sqrt n}\le |x_k|\le \frac1{\sqrt{n\delta_{\nu N}/2}}$ for $k\in\sigma_0(x)$,  and 
\begin{equation}\label{rho}
\sum_{k\in\sigma_0(x)}|x_k|^2\ge\rho^2.
\end{equation}
We may write now
\begin{align*}
\sum_{k=1}^n(1-|f(tx_kX_{jk}\eta_j)|^2)\ge\sum_{k\in\sigma_0(x)}(1-|f(tx_kX_{jk}\eta_j)|^2).
\end{align*}
Note that for $k\in\sigma_0$, and for $|X_{jk}|\le M$, and for $|\eta_j|\le C$, we have
$$
|tx_kX_{jk}\eta_j|\le \frac{|t|CM\sqrt2}{\sqrt{N\delta_{\nu N}}}.
$$
Taking  $t={\kappa}{\sqrt{N\delta_{\nu N}}}$ for  $\kappa=\frac1{CM\sqrt 2}$, we get
\begin{equation*}
|tx_kX_{jk}\eta_j|\le 1,
\end{equation*}
and 
\begin{equation*}
1-|f_{\eta_j}(tx_kX_{jk}\eta_j)|^2\ge \frac{11}{24}t^2x_k^2\eta_j^2\E|X_{jk}|^2\mathbb I\{|X_{jk}|\le M\}\ge \frac{11}{48}t^2x_k^2\eta_j^2.
\end{equation*}
Repeating now the {last part of the proof of Lemma \ref{firstfixpoint} and taking into account inequality \eqref{rho}}, we obtain  for $\tau<\rho\min\{\frac{\sqrt c}{\sqrt{60}},\frac{\sqrt{\ln 2}}{\sqrt 3}MC_1\}$, and for $|t|= \kappa\sqrt{N\delta_{\nu N}}$,

 \begin{equation}\label{3.13}
 |\prod_{k=1}^n\E_{\xi_{jk}X_{jk}}\exp\{it\eta_jx_k\xi_{jk}X_{jk}\}|\le \exp\{-c\rho^2pt^2\eta_j^2\},
 \end{equation}
 where $c$ is an absolute constant as in \eqref{2.13}.  
 We may choose $C_1$ large enough such that the following inequalities hold for $|t|=\kappa\sqrt{N\delta_{\nu N}}$:
 \begin{equation}\label{2.14}
 |\E_{\eta_j}\{\exp\{-cpt^2\eta_j^2\}\big||\eta_j|\le C_1\}|\le \exp\{-ct^2p/24\}.
 \end{equation}
 We use here that $|t|p\le \delta_0$ by \eqref{cond1}.
 Then we obtain
 \begin{align}\label{2.15}
 \Pr\{\sum_{j=1}^n\zeta_j^2\le \tau^2Np\}\le \exp\{Np\tau^2t^2/2\}\Big(\exp\{-c\beta t^2Np/24\}+\Big(\frac{\beta}{\alpha}\Big)^{N\frac{\beta}{1-\beta}})\}
 \end{align} 
 Furthermore, we may take $C_1$ {large enough such that} $\alpha\ge \frac45$ and choose $\beta=\frac25$. We get
 \begin{align}\label{2.16}
  \Pr\{\sum_{j=1}^n\zeta_j^2\le \tau^2Np\}\le \exp\{Np\tau^2t^2/2\}\Big(\exp\{-c t^2Np/60\}+2^{-2N/3}\Big).
 \end{align}\label{2.17}
   For $\tau<\min\{\frac{\sqrt c}{\sqrt{60}},\frac{\sqrt{\ln 2}}{\sqrt 3}MC_1\}$, we have for $|t|=\kappa\sqrt{N\delta_{\nu N}}$,
 \begin{equation}\label{2.18}
 \Pr\{\sum_{j=1}^n\zeta_j^2\le \tau^2Np\}\le\exp\{-ct^2Np/120\}.
 \end{equation} 
 This inequality implies that
 \begin{equation}\label{2.18}
 \Pr\{\sum_{j=1}^N\zeta_j^2\le \tau^2Np\}\le\exp\{-c(\rho^2N^2\kappa^2p\delta_{\nu N}\wedge N)/120\}.
 \end{equation}
 { Thus the lemma is proved.}
\end{proof}

Furthermore, we consider the sets defined as
\begin{equation}
\widehat {\mathcal C}_{\nu}:={\mathcal IC}_{\nu-1}\cap {\mathcal C_{\nu}}, \ \nu=1,\ldots, L.
\end{equation}
\begin{lem}\label{finsparse} Under conditions of Theorem \ref{minmain} we have, for $\nu=1,\ldots,L$,
\begin{equation*}
\Pr\{\inf_{\mathbf x\in\widehat{\mathcal C}_{\nu}}\|\mathbf X\mathbf x\|_2\le \tau\sqrt{Np}\}\le \exp\{-cNp_{\nu N}\}.
\end{equation*}
\end{lem}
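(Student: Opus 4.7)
The plan is to combine the per-vector concentration bound of Lemma \ref{2.6} with a covering argument that exploits the compressibility of vectors in $\widehat{\mathcal C}_\nu$. The very definition $\widehat{\mathcal C}_\nu=\mathcal{IC}_{\nu-1}\cap\mathcal C_\nu$ is tailored to make both ingredients simultaneously available: the incompressibility at level $\nu-1$ legitimizes Lemma \ref{2.6}, while the compressibility at level $\nu$ yields a relatively small $\eta$-net. Throughout, I shall work on the event $\{\|\mathbf X\|\le K\sqrt{Np}\}$; its complement contributes an additive $\Pr\{s_1>K\sqrt{Np}\}$ already absorbed in the statement of Theorem \ref{minmain}.

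Concretely, fix $\eta=\tau_0/(2K)$ and let $\mathcal N_\nu$ be a maximal $\eta$-separated subset of $\widehat{\mathcal C}_\nu$ in the Euclidean metric. By maximality $\mathcal N_\nu$ is an $\eta$-net of $\widehat{\mathcal C}_\nu$, and by construction every $\mathbf y\in\mathcal N_\nu$ lies in $\mathcal{IC}_{\nu-1}$, so Lemma \ref{2.6} applies to each net point. To bound $|\mathcal N_\nu|$ one uses $\widehat{\mathcal C}_\nu\subset\mathcal C_\nu$: each element of $\mathcal C_\nu$ lies within $\rho$ of some $k_\nu$-sparse vector with $k_\nu=\lceil\delta_{\nu,N}n\rceil$, and a standard choice-of-support-plus-volumetric estimate gives
\[
\log|\mathcal N_\nu|\;\le\; k_\nu\log(Cn/k_\nu)+k_\nu\log(C/\eta)\;\le\; C'\,\delta_{\nu,N}\,n\,(|\log p_{\nu,N}|+1)\;\le\; C''\delta_0\,Np_{\nu,N},
\]
where in the last step I use $\delta_{\nu,N}=\delta_0p_{\nu,N}/(|\log p_{\nu,N}|+1)$ and $n\le N$.

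Applying Lemma \ref{2.6} to each $\mathbf y\in\mathcal N_\nu$ and taking a union bound then yields
\[
\Pr\bigl\{\exists\,\mathbf y\in\mathcal N_\nu:\|\mathbf X\mathbf y\|_2\le 2\tau\sqrt{Np}\bigr\}\;\le\;\exp\bigl\{(C''\delta_0-c_1)Np_{\nu,N}\bigr\}\;\le\;\exp\bigl\{-\tfrac{c_1}{2}Np_{\nu,N}\bigr\},
\]
provided $\delta_0$ is chosen so small that $C''\delta_0\le c_1/2$. To transfer back to all of $\widehat{\mathcal C}_\nu$, note that for any $\mathbf x\in\widehat{\mathcal C}_\nu$ there exists $\mathbf y\in\mathcal N_\nu$ with $\|\mathbf x-\mathbf y\|_2\le\eta$, so on $\{\|\mathbf X\|\le K\sqrt{Np}\}$ one has $\|\mathbf X\mathbf y\|_2\le\|\mathbf X\mathbf x\|_2+K\sqrt{Np}\cdot\eta=\|\mathbf X\mathbf x\|_2+\tfrac{\tau_0}{2}\sqrt{Np}$; hence $\|\mathbf X\mathbf x\|_2\le\tau\sqrt{Np}$ (for $\tau\le\tau_0$) forces $\|\mathbf X\mathbf y\|_2\le 2\tau\sqrt{Np}$ for some $\mathbf y\in\mathcal N_\nu$, closing the loop.

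The main obstacle is the razor-thin balance between the metric entropy of $\mathcal C_\nu$ and the concentration rate delivered by Lemma \ref{2.6}: both scale as $Np_{\nu,N}$, so the union bound survives only if the implicit constant multiplying the entropy can be forced below $c_1$. This in turn demands $\delta_{\nu,N}\lesssim p_{\nu,N}/(|\log p_{\nu,N}|+1)$, which is exactly the scaling hard-coded into the recursive definition of $(p_{\nu,N},\delta_{\nu,N})$ in Section \ref{s3}. The $(|\log p|+1)^{-1}$ factor is the unavoidable ``entropy tax'' paid per iteration, and its harmlessness relies on the multiplicative growth $\delta_{\nu,N}/\delta_{\nu-1,N}\sim Np/(|\log p|+1)$, which by Corollary \ref{ineq} still reaches a constant sparsity level $\gamma_0$ in $L\sim\log N/\log\log N$ steps.
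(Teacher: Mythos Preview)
Your proof is correct and follows essentially the same route as the paper's: apply the per-vector bound of Lemma \ref{2.6} (legitimized by $\widehat{\mathcal C}_\nu\subset\mathcal{IC}_{\nu-1}$), cover $\widehat{\mathcal C}_\nu\subset\mathcal C_\nu$ by an $\eta$-net whose log-cardinality is $O(\delta_0 Np_{\nu,N})$ via the choice-of-support-plus-volumetric estimate, and take a union bound that survives once $\delta_0$ is small enough. Your write-up is in fact more explicit than the paper's on two points---you state clearly that the passage from the infimum over $\widehat{\mathcal C}_\nu$ to the infimum over the net requires restricting to $\{\|\mathbf X\|\le K\sqrt{Np}\}$, and you spell out how the definition $\delta_{\nu,N}=\delta_0 p_{\nu,N}/(|\log p_{\nu,N}|+1)$ is exactly what makes the entropy $\lesssim\delta_0 Np_{\nu,N}$---whereas the paper leaves both of these implicit.
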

\begin{proof}
According to Lemma \ref{2.6} we have for any fixed $\mathbf x\in\widehat{\mathcal C}_{\nu}$
\begin{equation*}
\Pr\{\|\mathbf X\mathbf x\|_2\le 2\tau\sqrt{Np}\}\le \exp\{-c_1Np_{\nu, N}\}.
\end{equation*}
Consider $\eta=\frac{\tau}{K}$-net $\mathcal N$ of $\widehat{\mathcal C}_{\nu}$. Then the event $ \{\inf_{\mathbf x\in\widehat{\mathcal C}_{\nu}}\|\mathbf X\mathbf x\|_2\le \tau\sqrt{Np}\}$ implies
\begin{equation}
 \{\inf_{\mathbf x\in\mathcal N}\|\mathbf X\mathbf x\|_2\le 2\tau\sqrt{Np}\}.
\end{equation}
Without loss of generality  we may assume that $\delta_{LN}<1$.
Using {a} union bound, we get
\begin{align}
\Pr\{\inf_{\mathbf x\in\widehat{\mathcal C}_{\nu}}\|\mathbf X\mathbf x\|_2\le \tau\sqrt{Np}\}\le \binom{n}{ n\delta_{\nu N}}|\mathcal N|\exp\{-c_1Np_{\nu, N}\}
\end{align}
Using Stirling's formula and {a} simple bound for the cardinality of {an} $\eta$-net,  for some  sufficiently small absolute constant $\alpha_0>0$ (does not depend on $\nu$) and
\begin{equation*}
\delta_{\nu N}=\alpha_0 p_{\nu N}/(|\log p_{\nu,N}|+1),\ p_{\nu N}:=Np\delta_{\nu-1,N}
\end{equation*}
we get
\begin{align*}
\Pr\{\inf_{\mathbf x\in\widehat{\mathcal C}_{\nu}}\|\mathbf X\mathbf x\|_2\le \tau\sqrt{Np}\}\le\exp\{-\widehat c_1Np_{\nu N}\}.
\end{align*}
Thus Lemma \ref{finsparse} is proved.
\end{proof}

Now we consider the case $Np^2/(|\log p|+1)>D$ for some sufficiently large constant $D$. Let $\mathbf x\in{\it Incomp}(\delta_{0N},\rho)$ and $\sigma(\mathbf x)$ {denote the set described} in Lemma \ref{sig}. 
Let 
\begin{equation*}
\zeta_j=\sum_{k=1}^nx_k\xi_{jk}X_{jk}, j=1,\ldots, N.
\end{equation*}
We have
\begin{equation*}
\mathcal L(\zeta_j,\tau\sqrt p)\le \mathcal L(\sum_{k\in\sigma(\mathbf x)}x_k\xi_{jk}X_{jk},\tau\sqrt p).
\end{equation*}
Using  {a} Berry-Esseen bound we get
\begin{equation*}
\mathcal L(\zeta_j,\tau\sqrt p)\le C\tau+C\frac{\sum_{k\in\sigma(x)}x_k^3p\E|X_{jk}|^3}{(\sum_{k\in\sigma(\mathbf x)}x_k^2p)^{\frac32}}\le C\tau+\frac{C\mu_3}{\rho\sqrt {n\delta_{0N}p}}.
\end{equation*}
Note that $np\delta_{0N}=y\delta_0Np^2/(1+|\ln p|)$.
Choosing $D$ sufficiently large, we {have}
\begin{equation*}
\mathcal L(\zeta_j,\tau\sqrt p)\le 1-b,
\end{equation*}
for some constant $b\in(0,1)$. By Lemma \ref{tensorization} we get
\begin{equation*}\label{big}
\Pr\{\|\mathbf X\mathbf x\|_2\le 2\tau\sqrt{Np}\}\le \exp\{-cN\},
\end{equation*} 
for $\tau\le \tau_0$ and $c>0$.

Inequality \eqref{big} implies that there exists $\gamma_0>0$ such that
\begin{equation*}
\Pr\{\inf_{\mathbf x\in \mathcal C_1\cap{\it Incomp}(\delta_0,\rho)}\|\mathbf X\mathbf x\|_2\le\tau\sqrt{Np}\}\le \exp\{-cN\}.
\end{equation*}

Note that
\begin{equation*}
\text{\it Comp}(\delta_{LN},\rho)\subset\mathcal C_{0}\cup\left(\cup_{\nu=1}^{L}\widehat{\mathcal C}_{\nu}\right).
\end{equation*}
Using {a} union bound, we get 
\begin{equation}\label{spatsefinal}
\Pr\{\inf_{x\in\text{\it Comp}(\delta_{LN},\rho)}\|\mathbf X\mathbf x\|_{2}\le\tau\sqrt{np}\}\le \exp\{-cNp\}+\sum_{\nu=1}^{L-1}
\exp\{-c(Np)^{\nu}N\delta_{0,N}\}\le \exp\{-\overline cNp\}.
\end{equation}
By Corollary \ref{ineq},
$$
\text{\rm Comp}(\gamma_0,\rho)\subset \mathcal C_L.
$$
This implies that
\begin{equation}\label{inc}
\inf_{\mathbf x\in\text{\it Incomp}(\gamma_0,\rho)}\|\mathbf X\mathbf x\|_{2}\le 
\inf_{\mathbf x\in\text{\it Incomp}(\delta_{LN},\rho)}\|\mathbf X\mathbf x\|_{2}.
\end{equation}
In what follows we shall estimate the probability 
$\Pr\{\inf_{\mathbf x\in\text{\it Incomp}(\gamma_0,\rho)}\|\mathbf X\mathbf x\|_{2}\le \tau\sqrt{Np}\}$.

\subsection{{Incompressible Vectors}}\label{s4} 

Using {a decomposition of the unit sphere $\mathbb S^{(n-1)}={\it Comp}\cup{\it Incomp}$,
we decompose  the invertibility problem onto two sub problems for compressible and incompressible vectors:}
\begin{align}\label{decomp}
\Pr\{s_n(\mathbf X)&\le \varepsilon \sqrt p\sqrt N\}
\notag\\&\le\Pr\{\inf_{\mathbf x\in{\it Comp}}\|\mathbf X\mathbf x\|_2\le\varepsilon \sqrt p\sqrt N\}+ \Pr\{\inf_{\mathbf x\in{\it Incomp}}\|\mathbf X\mathbf x\|_2\le\varepsilon\sqrt p\sqrt N\}.
\end{align}
A bound for the compressible vectors follows from inequality \eqref{spatsefinal}.
It remains to find a lower bound for $\|\mathbf X\mathbf x\|_2$ for incompressible vectors. 
Let $\eta,\eta_1,\ldots,\eta_N$ denote standard Gaussian random variables  independent of ${X_{jk},\xi_{jk}}$ for $1\le j\le N, 1\le k\le n$. We shall prove the following lemma. 
\begin{lem} \label{new}Let $x\in I\mathcal C(\delta,\rho)$. Then there exist absolute constants $c_1$ such that for any $C>0$ , the following inequality
\begin{equation}
\Pr\{\|\mathbf Xx\|_{2}\le t\sqrt{Np}\}\le (\frac {2t}{\sqrt{t^2+\rho^2/2}})^N+(\frac{2c_0}{C}\exp\{-\frac{C^2}2\})^N,
\end{equation}
holds for $t\ge c_1\mu_4/\sqrt{Np\delta}$. 
\end{lem}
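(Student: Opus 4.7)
The plan is to adapt the Chernoff--plus--Gaussian--smoothing template already used in the proofs of Lemma~\ref{firstfixpoint} and Lemma~\ref{2.6}, but tuned so as to produce a \emph{polynomial}--in--$t$ factor rather than the exponential--in--$N$ estimate obtained there. I would first write
\[\|\mathbf X x\|_2^2=\sum_{j=1}^N\zeta_j^2,\qquad \zeta_j=\sum_k x_k\xi_{jk}X_{jk},\]
note that the $\zeta_j$ are i.i.d.\ with $\mathbb E\zeta_j=0$ and $\mathbb E\zeta_j^2=p$, and apply the exponential Markov bound together with the Gaussian identity $e^{-\lambda z^2}=\mathbb E_\eta e^{i\sqrt{2\lambda}z\eta}$, $\eta\sim N(0,1)$, to obtain
\[\Pr\{\|\mathbf X x\|_2^2\le t^2Np\}\le e^{\lambda t^2Np}\Bigl(\mathbb E_{\eta}\phi_{\zeta}(\sqrt{2\lambda}\eta)\Bigr)^N.\]

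Next I would invoke the selector $\sigma_0(x)$ from Lemma~\ref{sig} (which provides $\sum_{k\in\sigma_0}x_k^2\ge\rho^2$ and $|x_k|\le\sqrt{2/(n\delta)}$), together with the standard characteristic--function estimates $|1-p+pz|^2\le\exp(-2p(1-p)(1-\re z))$ for $|z|\le 1$ and $1-\cos\theta\ge\tfrac{11}{24}\theta^2$ for $|\theta|\le 1$, exactly in the spirit of \eqref{2.11}--\eqref{3.13}. Factorising the characteristic function over $k\in\sigma_0$ yields, on the event $\{|\eta|\le C\}$,
\[|\phi_\zeta(\sqrt{2\lambda}\eta)|\le\exp(-c\lambda p\rho^2\eta^2),\]
valid as long as $\sqrt{2\lambda}\,|\eta|\cdot\max_{k\in\sigma_0}|x_kX_{jk}|\le 1$. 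Splitting the Gaussian integral in $\eta$ at $C$ and bounding the tail by the Mills estimate $\Pr\{|\eta|>C\}\le(c_0/C)e^{-C^2/2}$ then gives
\[\mathbb E_{\eta}\phi_\zeta(\sqrt{2\lambda}\eta)\le(1+2c\lambda p\rho^2)^{-1/2}+\frac{c_0}{C}e^{-C^2/2}.\]

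Choosing $\lambda p\asymp 1/t^2$ to minimise the Chernoff bound makes the prefactor $e^{\lambda t^2 p}$ into a bounded constant and collapses the Gaussian integral to a constant multiple of $t/\sqrt{t^2+\rho^2/2}$. Taking $N$--th powers one obtains
\[\Pr\{\|\mathbf X x\|_2\le t\sqrt{Np}\}\le\Bigl(\tfrac{t}{\sqrt{t^2+\rho^2/2}}+\tfrac{c_0}{C}e^{-C^2/2}\Bigr)^N,\]
and the statement of Lemma~\ref{new} follows from the elementary algebraic inequality $(a+b)^N\le 2^N\max(a,b)^N\le 2^N(a^N+b^N)=(2a)^N+(2b)^N$: this last step is exactly what produces the factor of $2$ in each of the two numerators in the right-hand side of the lemma.

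The main obstacle, and the source of the hypothesis $t\ge c_1\mu_4/\sqrt{Np\delta}$, is enforcing the range of validity of the inequality $1-\cos\theta\ge\frac{11}{24}\theta^2$. After truncating at $|X_{jk}|\le M$ with $M$ chosen by a Paley--Zygmund argument using $\mu_4=\mathbb E|X|^4$ so that $\mathbb E X^2\mathbb I\{|X|\le M\}\ge\tfrac12$, and using $|x_k|\le\sqrt{2/(n\delta)}$, $|\eta|\le C$, the constraint $\sqrt{2\lambda}\,|\eta|\cdot|x_kX_{jk}|\le 1$ combined with the optimal $\lambda p\asymp 1/t^2$ forces precisely $t\gtrsim\mu_4/\sqrt{Np\delta}$. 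One must further verify that the Paley--Zygmund truncation error and the contribution from the coordinates outside $\sigma_0(x)$ are absorbed into the leading factor $t/\sqrt{t^2+\rho^2/2}$; this is again guaranteed by the same lower bound on $t$ together with the fourth--moment hypothesis.
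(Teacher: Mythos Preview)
Your proposal is correct and follows essentially the same route as the paper: Chernoff bound with Gaussian linearisation $e^{-\lambda z^2}=\E_\eta e^{i\sqrt{2\lambda}z\eta}$, a characteristic--function estimate restricted to the selector set $\sigma_0(x)$ from Lemma~\ref{sig}, the Gaussian tail split at level $C$, and finally $(a+b)^N\le(2a)^N+(2b)^N$.

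The one technical difference is in how the individual factor $|\E_{X,\xi}\exp\{i\theta x_k X\xi\}|$ is bounded. You reuse the device of Lemmas~\ref{firstfixpoint} and~\ref{2.6}: truncate $X$ at a level $M$ chosen so that $\E X^2\mathbb I\{|X|\le M\}\ge\tfrac12$ and then apply $1-\cos\theta\ge\tfrac{11}{24}\theta^2$. The paper instead expands the characteristic function directly to third order,
\[
\bigl|1+p(\E_{X}e^{i\theta x_k X}-1)\bigr|\le\Bigl|1-\tfrac12\theta^2x_k^2+\tfrac{\E|X|^3}{6}|\theta x_k|^3\Bigr|,
\]
and controls the cubic remainder using $\E|X|^3\le\mu_4$; no truncation is needed. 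Both routes deliver the bound $\exp\{-c\,\theta^2x_k^2\}$ on the event $\{|\eta|\le C\}$, and hence the same final structure. The Taylor route produces the constraint $t\ge c_1\mu_4/\sqrt{Np\delta}$ exactly as stated, whereas your truncation route naturally yields the slightly weaker $t\gtrsim\sqrt{\mu_4}/\sqrt{Np\delta}$ (since $M\asymp\sqrt{\mu_4}$), which is still implied by the lemma's hypothesis because $\mu_4\ge(\E X^2)^2=1$.

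One small slip: the Chernoff prefactor is $e^{\lambda t^2Np}$, not $e^{\lambda t^2p}$; with $\lambda p\asymp1/t^2$ this is $e^{cN}$ and must be distributed into the $N$--fold product, contributing a bounded factor per term. The paper is equally casual about this $e^N$, and it only shifts the harmless constants in the final bound.
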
 
	\begin{proof}
	We may write
	\begin{align}
	\Pr\{\|\mathbf Xx\|_2\le t\sqrt{Np}\}=\Pr\{\sum_{j=1}^N\zeta_j^2<t^2Np\}
	\end{align}
	where $\zeta_j=\sum_{k=1}^nX_{jk}\xi_{jk}x_k$.
	Applying Markov's inequality, we get
	\begin{align}\label{first}
	\Pr\{\sum_{j=1}^N\zeta_j^2<t^2Np\}\le{\rm e}^N\E\exp\{-\frac1{t^2p}\sum_{j=1}^N\zeta_j^2\}=
	{\rm e}^N\prod_{j=1}^N\E\exp\{-\frac1{t^2p}\zeta_j^2\}.
	\end{align}
	We may rewrite the r.h.s. of \eqref{first} as follows
	\begin{align}\label{second}
	\Pr\{\sum_{j=1}^N\zeta_j^2<t^2Np\}\le{\rm e}^N\prod_{j=1}^N\E\exp\{i\frac1{t\sqrt p}\zeta_j\eta_j\}.
	\end{align}
Conditioning by $\eta_j$, we get
\begin{equation}
\Pr\{\sum_{j=1}^N\zeta_j^2<t^2Np\}\le{\rm e}^N\prod_{j=1}^N\E_{\eta_j}\prod_{k=1}^n|\E_{X_{jk}\xi_{jk}}\exp\{i\frac1{t\sqrt p}\eta_jx_kX_{jk}\xi_{jk}\}|
\end{equation}
By Lemma \ref{sig} there exists a set $\sigma(x)$ such that
for $k\in\sigma(x)$ we have $\frac1{2\sqrt n}\le |x_k|\le \frac {\sqrt 2}{ \sqrt{n\delta}}$ and 
 $|\sigma(x)|\ge \frac1{2y}\delta \rho^2N$.
We may write the following inequality
\begin{align}
\E_{\eta_j}\prod_{k\in\sigma(x)}|&\E_{X_{jk}\xi_{jk}}\exp\{i\frac1{t\sqrt p}\eta_jx_kX_{jk}\xi_{jk}\}|\notag\\&\le \E_{\eta_j}\prod_{k\in\sigma(x)}|\E_{X_{jk}\xi_{jk}}\exp\{i\frac1{t\sqrt p}\eta_jx_kX_{jk}\xi_{jk}\}|.
\end{align}
 For any constant $C$ we have 
\begin{align}
\E_{\eta_j}\prod_{k\in\sigma(x)}|&\E_{X_{jk}\xi_{jk}}\exp\{i\frac1{t\sqrt p}\eta_jx_kX_{jk}\xi_{jk}\}|\notag\\&\le \E_{\eta_j}\left(\prod_{k\in\sigma(x)}|\E_{X_{jk}\xi_{jk}}\exp\{i\frac1{t\sqrt p}\eta_jx_kX_{jk}\xi_{jk}\}|\right)\mathbb I\{|\eta_j|\le C\}+\Pr\{|\eta_j|>C\}.
\end{align}
Consider $k\in\sigma(x)$ now. Taking expectation with respect to $\xi_{jk}$ conditioning on $X_{jk}$ and $\eta_j$), we obtain 
\begin{align}
|\E_{X_{jk}\xi_{jk}}&\big(\exp\{i\frac1{t\sqrt p}\eta_jx_kX_{jk}\xi_{jk}\}\big)|\notag\\&= |1+p(\E_{X_{jk}}\exp\{i\frac1{t\sqrt p}\eta_jx_kX_{jk}\}-1)|.
\end{align}
Applying Taylor's formula for {the} characteristic function $\E_{X_{jk}}\exp\{i\frac1{t\sqrt p}\eta_jx_kX_{jk}\}$, we may write
\begin{align}
|1+p(\E_{X_{jk}}&\exp\{i\frac1{t\sqrt p}\eta_jx_kX_{jk}||\eta_j|\le C\}-1)|\notag\\&\le |1+p(-\frac1{2t^2p}\eta_j^2x_k^2+\frac{\E|X_{11}|^3}{6t^3p^{\frac32}}|x_k|^3|\eta_j|^3){|}.
\end{align}
Since $\E|X_{11}|^3\le \E^{\frac3{4}}|X_{11}|^{4}\le \mu_4^{\frac3{4}}\le \mu_4$, for $|\eta_j|\le C$, and
\begin{equation}
t\ge \frac {C\mu_4}{\sqrt{yNp\delta}},
\end{equation}
we have
$$
 \frac{|x_k||\eta_j|\E|X_{11}|^3}{3t\sqrt p}\le \frac{C\mu_4\sqrt2}{3t\sqrt {yN\delta p}}\le \frac1{2}.
 $$
 Taking into account this inequality, we get for $|\eta_j|\le C$,
\begin{equation}
|1+p(\E_{X_{jk}\xi_{jk}}\exp\{i\frac1{t\sqrt p}\eta_jx_kX_{jk}\}-1)|\le \exp\{-\frac 1{4t^2}x_k^2\eta_j^2\}.
\end{equation}
Since $\sum_{k\in\sigma(x)}x_k^2\ge \rho^2$, this inequality implies that
\begin{align}
\prod_{k=1}^n|\E_{X_{jk}\xi_{jk}}\exp\{i\frac1{t\sqrt p}\eta_jx_kX_{jk}\xi_{jk}\}|\mathbb I\{|\eta_j|\le C\}&\le \exp\{-\frac {\rho^2}{4t^2}\eta_j^2\}.
\end{align}

From here it follows for any $C>0$
\begin{align}
\Pr\{\sum_{j=1}^N\zeta_j^2<t^2Np\}\le\prod_{j=1}^N\big(\E\exp\{-\frac{\rho^2}{4t^2}\eta_j^2\}+\Pr\{|\eta_j|>C\}\big).
\end{align}
There exists an absolute constant $c_0>0$ such that
\begin{equation}
\Pr\{|\eta_j|>C\}\le \frac{c_0}{C}\exp\{-\frac{C^2}2\}.
\end{equation}
This inequality implies that 
\begin{align}
\Pr\{\sum_{j=1}^N&\zeta_j^2<t^2Np\}\le(\frac {t}{\sqrt{t^2+\rho^2/2}}+\frac{c_0}{C}\exp\{-\frac{C^2}2\})^N\notag\\&\le(\frac {2t}{\sqrt{t^2+\rho^2/2}})^N+(\frac{2c_0}{C}\exp\{-\frac{C^2}2\})^N.
\end{align}
Thus, Lemma \ref{new} is proved.
\end{proof}

\textbf{Proof of Theorem \ref{minmain}:}\\

First we note that
\begin{align}\label{3.58}
\Pr\{\inf_{\mathbf x\in\mathcal S^{(n-1)}}\|\mathbf X\mathbf x\|_2\le t\sqrt{Np}\}&\le \Pr\{\inf_{\mathbf x\in \text{Com}(\delta_{L,N},\rho)}\|\mathbf X\mathbf x\|_2\le t\sqrt{Np}\}\notag\\&+
\Pr\{\inf_{\mathbf x\in \text{Incomp}(\delta_{L,N},\rho)}\|\mathbf X\mathbf x\|_2\le t\sqrt{Np}\}.
\end{align}
By inequality \eqref{spatsefinal}, for some constant $\overline c>0$,
\begin{equation}\label{3.59}
\Pr\{\inf_{\mathbf x\in \text{Com}(\delta_{L,N},\rho)}\|\mathbf X\mathbf x\|_2\le t\sqrt{Np}\}\le \exp\{-\overline cNp\}.
\end{equation}
By Relation \eqref{inc}, we have
\begin{equation}
\Pr\{\inf_{\mathbf x\in \text{Incomp}(\delta_{L,N},\rho)}\|\mathbf X\mathbf x\|_2\le t\sqrt{Np}\}\le\Pr\{\inf_{\mathbf x\in \text{Incomp}(\gamma_0,\rho)}\|\mathbf X\mathbf x\|_2\le t\sqrt{Np}\}
\end{equation}
We consider an  $\varepsilon$-net $\mathcal N$ on the set of incompressible vectors $\mathcal {IC}(\gamma_0,\rho)$ with  {$\varepsilon=\frac{t}{2K}$  where $K>0$ is fixed}. It is straightforward to check that
\begin{equation}
\Pr\{\inf_{x\in\mathcal {IC}(\gamma_0,\rho)}\|\mathbf X\mathbf x\|_2\le \tau\sqrt{N{p}},\,\|\mathbf X\|\le K\sqrt{Np}\}\le\Pr\{\inf_{x\in\mathcal {N}}\|\mathbf X\mathbf x\|_2\le 2\tau\sqrt{N{p}}\}
\end{equation}
Applying {a union-bound}, we get 
\begin{equation}
\Pr\{\inf_{x\in\mathcal {N}}\|\mathbf X\mathbf x\|_2\le 2\tau\sqrt{N{p}}\}\le |\mathcal N|\sup_{x\in \mathcal{IC}(\gamma_0,\rho)}\Pr\{\|\mathbf X\mathbf x\|_2\le2\tau\sqrt{Np}\}.
\end{equation}

By \cite[Proposition2.1]{RudVer}, we have
$$
|\mathcal N|\le n\left(1+\frac2{\varepsilon}\right)^{n-1}.
$$
 Then, applying {the} result of Lemma \ref{new}, we get (for {$t\ge ...\frac{c_1\mu_4}{\sqrt{N\gamma_0p}}$)} 
\begin{align}\label{3.17}
\Pr\{\inf_{x\in\mathcal{IC}(\gamma_0,\rho)}&\|\mathbf Xx\|_2\le t\sqrt{Np}\}\le  |\mathcal N|\left((\frac {2t}{\sqrt{t^2+\rho^2/2}})^N+(\frac{2c_0}{C}\exp\{-\frac{C^2}2\})^N\right)\notag\\&\le yN\left(1+\frac{4K}t\right)^{n-1}\left((\frac {2t}{\sqrt{t^2+\rho^2/2}})^N+(\frac{2c_0}{C}\exp\{-\frac{C^2}2\})^N\right).
\end{align}
It is easy to see that, for any $0< t \le \tau_0$,
\begin{equation}
\Pr\{\inf_{x\in\mathcal{IC}(\delta,\rho)}\|\mathbf Xx\|_2\le t\sqrt{Np}\}\le \Pr\{\inf_{x\in\mathcal{IC}(\delta,\rho)}\|\mathbf Xx\|_2\le \tau_0\sqrt{Np}\}.
\end{equation}
Without loss of generality we may assume that $\tau_0\le 4K$. Taking into account both that $N\le {\rm e}^N$ and $y<1$
  rewrite the  inequality \eqref{3.17} in the form

\begin{align}\label{3.17}
\Pr\{\inf_{x\in\mathcal{IC}(\gamma_0,\rho)}\|\mathbf Xx\|_2\le\tau_0\sqrt{Np}\}&\le \left(\frac{5K}{2\tau_0}\right)^{yN}\left((\frac {4{\rm e}\tau_0}{\sqrt{4\tau_0^2+\rho^2/2}})^N+(\frac{2c_0{\rm e}}{C}\exp\{-\frac{C^2}2\})^N\right)\notag\\& \le\left(\left(\frac {(5K)^y4{\rm e}\sqrt 2}{\rho}\tau_0^{(1-y)}\right)^N+(\frac{2c_0{\rm e}(5K)^y}{C\tau_0^y}\exp\{-\frac{C^2}2\})^N\right)
\end{align}}
Put 
$$
\tau_0=\left(\frac{\rho}{4\sqrt 2\cdot5^y{\rm e}^2K^{y}}\right)^{\frac1{1-y}}.
$$
For $N\ge 2$, we have
\begin{equation}
\frac {(5K)^y4{\rm e}\sqrt 2}{\rho}\tau_0^{(1-y)}\
\le \frac12{\rm e}^{-\frac12N}.
\end{equation}
Note that, by condition \eqref{np}, for $N$ such that
\begin{equation}
\ln N\ge \frac{\mu_4}{\tau_0\sqrt{B\gamma_0}},
\end{equation}
we have 
\begin{equation}
\tau_0\ge\frac{\mu_4}{\sqrt{Np\gamma_0}}.
\end{equation}
Moreover,
 choosing $C$ such that
 $$
 C{\rm e}^{\frac{C^2}2}\ge \frac{2c_0{\rm e}5^yK^y}{2^y\rho^{\frac y{1-y}}\tau_0^y}
 $$

we obtain that

\begin{equation}\label{final}
\Pr\{\inf_{x\in\mathcal{IC}(\gamma_0,\rho)}\|\mathbf Xx\|_2\le t\sqrt{Np}, \|\mathbf X
\|\le K\sqrt{Np}\}\le {\rm e}^{-N/2},
\end{equation}
for any $0\le t\le \tau_0$.
The result of Theorem \ref{minmain} follows now from inequalities \eqref{3.58}, \eqref{3.59} and \eqref{final}. 
(Since $\gamma_0$ is an  absolute constant defined in Corollary \ref{ineq} .) Theorem \ref{minmain} is proved.



\end{document}